\newtheorem{theorem}{Theorem}[section]
\newtheorem{lemma}[theorem]{Lemma}
\newtheorem{proposition}[theorem]{Proposition}
\newtheorem{corollary}[theorem]{Corollary}
\theoremstyle{definition}
\newtheorem{notation}[theorem]{Notation}
\newtheorem{example}[theorem]{Example}
\newcommand{\defn}[1]{{\em #1}}
\theoremstyle{remark}
\newtheorem{remark}[theorem]{Remark}
\newtheorem{problem}[theorem]{Problem}
\title{Linked systems of symmetric group divisible designs of type II} 
\date{
\today
}
\author{
 Hadi Kharaghani\thanks{Department of Mathematics and Computer Science, University of Lethbridge,
Lethbridge, Alberta, T1K 3M4, Canada. \texttt{kharaghani@uleth.ca}} 
\and  
 Sho Suda\thanks{Department of Mathematics Education,  Aichi University of Education, 1 Hirosawa, Igaya-cho,  Kariya, Aichi, 448-8542, Japan. \texttt{suda@auecc.aichi-edu.ac.jp}}
}
\begin{document}
\maketitle

\abstract{
 Linked systems of symmetric group divisible designs of type II is introduced, and several examples are obtained from affine resolvable designs and a variant of
 Mutually Orthogonal Latin Squares (MOLS). 
Furthermore, an equivalence between such symmetric group divisible designs and some association schemes with $5$-classes is provided. 
}

{\bf AMS Subject Classification:}{ 05E30,05B20}

{\bf Keywords:}{ Association scheme, Symmetric group divisible design, Hadamard matrix, Affine resolvable design, Mutually orthogonal Latin square.}

\section{Introduction}

This paper is a continuation of the paper \cite{KS}.  Therein the authors introduced the concept  of a \defn{linked system of symmetric group divisible designs} to be a collection of symmetric group divisible designs $A_{i,j}$ ($i,j\in\{1,\ldots,f\}$, $f\geq 3$) satisfying the following property; there exist non-negative integers $\sigma,\tau$ such that for any mutually distinct $i,j$, and $l$, $A_{i,j}A_{j,l}=\sigma A_{i,l}+\tau(J_v-A_{i,l})$ holds. 
This concept includes mutually unbiased biangular vectors \cite{HKS}, mutually unbiased real Hadamard matrices \cite{LMO}, as examples. 
Among other results, it was shown that a linked system of symmetric group divisible designs is equivalent to some association schemes with $4$-classes. For the details of some of the concepts not defined in this note, we refer the reader to \cite{KS}. 

Throughout this work, $I_n,J_n,O_n$ denote the $n\times n$ identity matrix, all-ones matrix,  and zero matrix, respectively.  
We omit the subscript when the order is clear from the context. 

Fixing a suitable ordering of points and blocks, (see \cite{KS}), a $v\times v$ $(0,1)$-matrix $A$ is the incidence matrix of a \emph{symmetric group divisible design} if $A A^\top=A^\top A=k I_v+\lambda_1(K_{m,n}-I_v)+\lambda_2(J_v-K_{m,n})$ holds for some non-negative integers $k,\lambda_1,\lambda_2$ where $v=mn$ and $K_{m,n}=I_m\otimes J_n$ and $A^\top$ denotes the transpose of $A$.

The purpose of this article is to introduce a different variant of class of symmetric group divisible designs, which we call  \emph{linked systems of symmetric group divisible designs of type II}. 
These are collection of symmetric group divisible designs $A_{i,j}$ ($i,j\in\{1,\ldots,f\}$, $f\geq 3$) satisfying the following properties; $A_{i,j}+K_{m,n}$ is a $(0,1)$-matrix for any distinct $i,j$, and there exist non-negative integers $\sigma,\tau,\rho$ such that for any mutually distinct $i,j$ and $l$, $A_{i,j}A_{j,l}=\sigma A_{i,l}+\tau (J_v-A_{i,l}-K_{m,n})+\rho K_{m,n}$ holds. 

The paper \cite{KS} dealt with symmetric group divisible designs $A$ for which $A K_{m,n}=K_{m,n}A$ is a multiple of $J_v$, while the present paper deals with symmetric group divisible designs $A$ satisfying  $A K_{m,n}=K_{m,n}A$ which is a multiple of $J_v-K_{m,n}$.  

Kharaghani, Sasani, and Suda \cite{KSS} introduced the concept of mutually unbiased Bush-type Hadamard matrices, and provided a construction using a Hadamard matrix and a set of MOLS.  
As will be shown in Section~\ref{sec:mubh}, these matrices give rise to Hadamard designs and an example of linked systems of symmetric group divisible designs of type II.   
Note that Hadamard designs are equivalent to affine resolvable designs, see Section~\ref{sec:ard} and \cite{HBCD}.  
Our results in this paper is applied to any affine resolvable design.
We also establish an equivalence between a linked system of symmetric group divisible designs of type II and specific association scheme with $5$-classes. 
Note that a suitable use of an affine resolvable design in a Latin square yields a symmetric design \cite{W}, see also \cite[Theorem~5.23]{S} and that linked systems of symmetric group divisible designs obtained from finite fields lead to an association scheme \cite{KScom}.

The organization of this paper is as follows.  
In Section~2, we prepare fundamental results on symmetric group divisible designs,  affine resolvable designs, and Latin squares. 
In Section~3, we study symmetric group divisible designs $A$ satisfying the property that $A+K_{m,n}$ is also a symmetric group divisible design.  
In Section~4, we introduce the concept of linked system of symmetric group divisible designs of type II, and derive some necessary conditions to conclude some restrictions on  the parameters of such objects. 
In Section~5, we show the equivalence of existence of some association schemes with $5$-classes and that of a linked system of symmetric group divisible designs of type II. This enables us to 
find an upper bound on the number of a linked system of symmetric group divisible designs of type II.  
In Section~6 and Section~7, we construct symmetric group divisible designs $A$ with the property that  $A K_{m,n}=K_{m,n}A$ is a multiple of $J_v-K_{m,n}$, and linked systems of symmetric group divisible designs of type II, which are based on affine resolvable designs and some specific Latin squares. Our main reference, unless otherwise specifically mentioned, is Part IV of the Handbook of Combinatorial Designs \cite{HBCD}.

\section{Preliminaries}
\subsection{Association schemes}
We follow \cite{BI} as our reference for this section. 
A \emph{symmetric association scheme with $d$-classes} 
with a finite vertex set $X$,  
is a set of non-zero $(0,1)$-matrices $A_0, A_1,\ldots, A_d$ with
rows and columns indexed by $X$, such that
\begin{enumerate}
\item $A_0=I_{|X|}$,
\item $\sum_{i=0}^d A_i = J_{|X|}$,
\item $A_i^\top=A_i$ for $i\in\{1,\ldots,d\}$ and, 
\item for any $i$, $j$ and $k$, there exist non-negative integers $p_{i,j}^k$ such that $A_iA_j=\sum_{k=0}^d p_{i,j}^k A_k$.
\end{enumerate}
The \defn{intersection matrix} $B_i$ is defined to be $B_i=(p_{i,j}^k)_{j,k=0}^{d}$. 
Since each $A_i$ is symmetric, it follows from the condition (iv) that the $A_i$'s necessarily commute.
The vector space spanned by the $A_i$'s over the real number field forms a commutative algebra, denoted by $\mathcal{A}$ and is called the \emph{Bose-Mesner algebra} or \emph{adjacency algebra}.
There exists also a basis of $\mathcal{A}$ consisting of primitive idempotents, say $E_0=(1/|X|)J_{|X|},E_1,\ldots,E_d$. 
Since  $\{A_0,A_1,\ldots,A_d\}$ and $\{E_0,E_1,\ldots,E_d\}$ are two bases of $\mathcal{A}$, there exist the change-of-basis matrices $P=(P_{i,j})_{i,j=0}^d$, $Q=(Q_{i,j})_{i,j=0}^d$ so that
\begin{align*}
A_i=\sum_{j=0}^d P_{j,i}E_j,\quad E_j =\frac{1}{|X|}\sum_{i=0}^{d} Q_{i,j}A_i.
\end{align*}   
The matrices $P,Q$ are said to be the \defn{first and second eigenmatrices} respectively.  
Since $(0,1)$-matrices $A_i$ ($0\leq i\leq d$) have disjoint support, the algebra $\mathcal{A}$ they form is closed under entrywise multiplication denoted by $\circ$. 
The \emph{Krein parameters} $q_{i,j}^k$ are defined by $E_i\circ E_j=\frac{1}{|X|}\sum_{k=0}^dq_{i,j}^k E_k$. 
The \emph{Krein matrix} $B_i^*$ is defined as $B_i^*=(q_{i,j}^k)_{j,k=0}^d$. 
The rank of $E_i$ is denoted by $m_i$ and is called the \emph{$i^{\rm th}$ multiplicity}.
The multiplicity $m_i$ is equal to $Q_{0,i}$. 

The following property of positivity of Krein parameters is very useful. 
\begin{proposition}\label{prop:krein}\upshape{\cite[Theorem~3.8]{BI}}
For any $i,j,k\in\{0,1,\ldots,d\}$, $q_{i,j}^k\geq0$ holds.
\end{proposition}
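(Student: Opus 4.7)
The plan is to derive positivity of each $q_{i,j}^k$ from the positive semidefiniteness of the Hadamard product $E_i\circ E_j$, combined with the mutual orthogonality of the primitive idempotents.

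First I would note that, since each $E_i$ is a real symmetric idempotent, it is an orthogonal projection onto $\mathrm{Im}(E_i)$ and hence positive semidefinite. Choosing an orthonormal basis $\mathbf{u}_1^{(i)},\ldots,\mathbf{u}_{m_i}^{(i)}$ of $\mathrm{Im}(E_i)$ allows us to write $E_i=\sum_{a=1}^{m_i}\mathbf{u}_a^{(i)}(\mathbf{u}_a^{(i)})^\top$, and similarly for $E_j$. A one-line computation of the Hadamard product of rank-one symmetric matrices gives $(\mathbf{x}\mathbf{x}^\top)\circ(\mathbf{y}\mathbf{y}^\top)=(\mathbf{x}\circ\mathbf{y})(\mathbf{x}\circ\mathbf{y})^\top$, so that
\begin{align*}
E_i\circ E_j=\sum_{a=1}^{m_i}\sum_{b=1}^{m_j}\bigl(\mathbf{u}_a^{(i)}\circ\mathbf{u}_b^{(j)}\bigr)\bigl(\mathbf{u}_a^{(i)}\circ\mathbf{u}_b^{(j)}\bigr)^\top
\end{align*}
is a sum of rank-one positive semidefinite matrices, hence itself positive semidefinite. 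This is the Schur product theorem specialized to the setting at hand.

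Next I would extract the individual Krein parameters by testing on eigenvectors of the Bose-Mesner algebra. Since the $E_k$'s are pairwise orthogonal projections with $E_kE_l=\delta_{k,l}E_k$, I pick any nonzero $\mathbf{v}_k\in\mathrm{Im}(E_k)$, which exists because $m_k=\mathrm{rank}(E_k)\geq 1$. Applying $E_i\circ E_j=\tfrac{1}{|X|}\sum_l q_{i,j}^l E_l$ to $\mathbf{v}_k$ gives
\begin{align*}
(E_i\circ E_j)\mathbf{v}_k=\frac{q_{i,j}^k}{|X|}\mathbf{v}_k, \qquad \text{whence} \qquad \mathbf{v}_k^\top(E_i\circ E_j)\mathbf{v}_k=\frac{q_{i,j}^k}{|X|}\|\mathbf{v}_k\|^2.
\end{align*}
The left-hand side is nonnegative by the previous step, and $\|\mathbf{v}_k\|^2>0$, so $q_{i,j}^k\geq 0$.

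The main obstacle is really only the verification of the Schur product identity, and even that reduces to the rank-one case by linearity. Once it is in hand, the remainder is routine bookkeeping with the spectral projections, and no additional features specific to association schemes beyond the orthogonality $E_kE_l=\delta_{k,l}E_k$ are needed.
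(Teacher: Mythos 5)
Your proof is correct. The paper itself gives no argument for this proposition --- it simply cites \cite[Theorem~3.8]{BI} --- and your argument is the standard one behind that citation: positive semidefiniteness of $E_i\circ E_j$ plus the eigenvalue extraction $(E_i\circ E_j)\mathbf{v}_k=\frac{q_{i,j}^k}{|X|}\mathbf{v}_k$ for $\mathbf{v}_k\in\mathrm{Im}(E_k)$, using $E_kE_l=\delta_{k,l}E_k$ and $m_k\geq 1$. The only (cosmetic) difference from the textbook treatment is how positive semidefiniteness of the Hadamard product is obtained: Bannai--Ito observe that $E_i\circ E_j$ is a principal submatrix of the positive semidefinite Kronecker product $E_i\otimes E_j$, whereas you reprove the Schur product theorem directly via the rank-one identity $(\mathbf{x}\mathbf{x}^\top)\circ(\mathbf{y}\mathbf{y}^\top)=(\mathbf{x}\circ\mathbf{y})(\mathbf{x}\circ\mathbf{y})^\top$; the two routes are equivalent in substance, with yours being self-contained and the submatrix observation being slightly shorter if the Kronecker-product fact is taken as known.
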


Each $A_i$
can be considered as the adjacency matrix of some undirected simple graph. 
The scheme is \emph{imprimitive} if,
on viewing the $A_i$ as adjacency matrices
of graphs $G_i$ on vertex set $X$, at least one of the $G_i$, $i \ne 0$, is disconnected. 
In this case, there exists a set $\mathcal{I}$ of indices such that $0$ and such $i$ are elements of $\mathcal{I}$ and $\sum_{j\in\mathcal{I}}A_j=I_p\otimes J_q$ for some $p,q$ with $p>1$ after suitably permuting the vertices. 
Thus the set $X$ is partitioned into $p$ subsets called \emph{fibers}, each of which has size $q$. 
The set $\mathcal{I}$ defines an equivalence relation on $\{0,1,\ldots,d\}$ by $j\sim k$ if and only if $p_{i,j}^k\neq 0$ for some $i\in \mathcal{I}$.  
Let $\mathcal{I}_0=\mathcal{I},\mathcal{I}_1,\ldots,\mathcal{I}_t$ be the equivalence classes on $\{0,1,\ldots,d\}$ by $\sim$. 
Then by \cite[Theorem~9.4]{BI} there exist $(0,1)$-matrices $\overline{A}_j$ ($0\leq j\leq t$) such that 
\begin{align*}
\sum_{i\in \mathcal{I}_j}A_i=\overline{A}_j\otimes J_q,
\end{align*}
and the matrices $\overline{A}_j$ ($0\leq j\leq t$) define an association scheme on the set of fibers. 
This is called the \emph{quotient association scheme} with respect to $\mathcal{I}$.

For fibers $U$ and $V$, let $\mathcal{I}(U,V)$ denote the set of indices $i$ of adjacency matrices $A_i$ such that 
an entry of $A_i$ of a row indexed by $U$ and a column indexed by $V$ is one. 
For $i\in \mathcal{I}(U,V)$, we define a $(0,1)$-matrix $A_i^{UV}$ by 
\begin{align*}
(A_i^{UV})_{xy}=\begin{cases}
1 &\text{ if } (A_i)_{xy}=1, x\in U, y\in V,\\
0 &\text{ otherwise}.
\end{cases}
\end{align*}
An imprimitive association scheme is called \defn{uniform} if its quotient association scheme is with $1$-class and there exist non-negative integers $a_{i,j}^k$ such that for  all fibers $U,V,W$ and $i\in\mathcal{I}(U,V),j\in\mathcal{I}(V,W)$, we have 
\begin{align*}
A_i^{UV}A_i^{VW}=\sum_{k}a_{i,j}^k A_k^{UW}.
\end{align*}

\subsection{Symmetric group divisible designs}
Let $m$ and $n$ be positive integers both not smaller than two. 
A \emph{{\rm(}square{\rm)} group divisible design with parameters $(v,k,m,n,\lambda_1,\lambda_2)$} is a pair $(V,\mathcal{B})$, where $V$ is a finite set of  $v$ elements called points, and $\mathcal{B}$ a collection of $k$-element subsets of $V$ called blocks with $|\mathcal{B}|=v$, in which the point set $V$ is partitioned into $m$ classes of size $n$, such that two distinct points from one class occur
together in $\lambda_1$ blocks, and two points from different classes occur together in exactly $\lambda_2$ blocks. 
A group divisible design is said to be \emph{symmetric} (or to have the \emph{dual property}) if its dual, that is the incidence structure interchanging the roles of points and blocks, is again a group divisible design with the same parameters. 
Throughout this paper, we always assume that $k>\lambda_1$\footnote{If $k=\lambda_1$, then its incidence matrix $A$ satisfies that $A=\bar{A}\otimes J_n$ for some incidence matrix $\bar{A}$ of a symmetric design.}. 

The \defn{incidence matrix} of a symmetric group divisible design is a $v\times v$ $(0,1)$-matrix $A$ with rows and columns indexed by the elements of $\mathcal{B}$ and $V$ respectively such that for $x\in V,B\in \mathcal{B}$
\begin{align*}
A_{B,x}=\begin{cases}
1 & \text{ if } x\in B,\\
0 & \text{ if } x\not\in B.
\end{cases}
\end{align*}
Then, after reordering the elements of $V$ and $\mathcal{B}$ appropriately,  
\begin{align}\label{eq:gdd}
A A^\top=A^\top A=k I_v+\lambda_1(K_{m,n}-I_v)+\lambda_2(J_v-K_{m,n}),  
\end{align}
where $K_{m,n}=I_m\otimes J_n$. 
Then the $v\times v$ $(0,1)$-matrix $A$ is the incidence matrix of a symmetric group divisible design if and only if  the matrix $A$ satisfies Equation \eqref{eq:gdd}. 
Thus we also refer to the $v\times v$ $(0,1)$-matrix $A$ satisfying  Equation~\eqref{eq:gdd} as a symmetric group divisible design.  
When $\lambda_1=\lambda_2=:\lambda$, a symmetric group divisible design is said to be a \emph{symmetric $2$-$(v,k,\lambda)$ design}.

Let $A$ be a symmetric group divisible design with parameters $(v,k,m,n,\lambda_1,\lambda_2)$. 
Then it follows that $A J_v=J_v A =k J_v$ and from Equation~\eqref{eq:gdd} with the previous equalities that
\begin{align}\label{eq:p1}
k^2=k+\lambda_1(n-1)+\lambda_2(v-n).
\end{align}

The following is due to a result of Bose \cite[Theorem~2.1]{B}. 
\begin{lemma}\label{lem:b}
Let $A$ be a symmetric group divisible design with parameters $(v,k,m,n,\lambda_1,\lambda_2)$.  
If $\lambda_1\neq \lambda_2$, then $AK_{m,n}A^\top=(n(\lambda_1-\lambda_2)+k-\lambda_1)K_{m,n}+n \lambda_2 J_v$. 
\end{lemma}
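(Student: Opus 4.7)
The plan is to show that $A$ commutes with $K_{m,n}$ under the hypothesis $\lambda_1\neq\lambda_2$, and then evaluate $AK_{m,n}A^\top = K_{m,n}(AA^\top)$ by substituting the GDD equation~\eqref{eq:gdd} and applying the elementary identities $K_{m,n}^2 = nK_{m,n}$ and $K_{m,n}J_v = J_v K_{m,n} = nJ_v$.

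The key step is to rearrange~\eqref{eq:gdd} so that it reads
\[
(\lambda_1-\lambda_2)K_{m,n} = AA^\top - (k-\lambda_1)I_v - \lambda_2 J_v.
\]
Since $\lambda_1\neq\lambda_2$, this realizes $K_{m,n}$ as a linear combination of $I_v$, $J_v$, and $AA^\top$. Now $A$ commutes with $I_v$ trivially; with $J_v$ because the row and column sums of $A$ are both equal to $k$, so $AJ_v = J_v A = kJ_v$; and with $AA^\top$ because the normality identity $AA^\top=A^\top A$ yields $A\cdot AA^\top = A\cdot A^\top A = AA^\top\cdot A$. Consequently $AK_{m,n} = K_{m,n}A$.

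With commutativity in hand, the calculation is short: $AK_{m,n}A^\top = K_{m,n}AA^\top$, and substituting~\eqref{eq:gdd} produces the three terms $(k-\lambda_1)K_{m,n}$, $(\lambda_1-\lambda_2)K_{m,n}^2 = n(\lambda_1-\lambda_2)K_{m,n}$, and $\lambda_2 K_{m,n}J_v = n\lambda_2 J_v$. Collecting the two coefficients of $K_{m,n}$ gives the claimed formula.

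I do not foresee any real obstacle; the only conceptually important point is that the assumption $\lambda_1\neq\lambda_2$ is used in an essential (and non-removable) way, as it is exactly what allows $K_{m,n}$ to be solved out of~\eqref{eq:gdd} in terms of matrices with which $A$ is already known to commute. Without this hypothesis the conclusion is in fact trivial: one has $AA^\top = (k-\lambda)I_v + \lambda J_v$, the matrix $K_{m,n}$ drops out of~\eqref{eq:gdd}, and the identity of the lemma collapses because $n(\lambda_1-\lambda_2)+k-\lambda_1$ becomes $k-\lambda$ while the right-hand side no longer has a distinguished $K_{m,n}$ component.
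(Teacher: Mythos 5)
Your proof is correct and complete. The paper itself offers no proof of this lemma---it is stated as a quoted result of Bose---so there is no internal argument to compare against; your route is exactly the standard proof underlying Bose's theorem: since $\lambda_1\neq\lambda_2$, Equation~\eqref{eq:gdd} lets you solve for $K_{m,n}$ as a linear combination of $I_v$, $J_v$, and $AA^\top$, each of which commutes with $A$ (the last precisely because the dual property gives $AA^\top=A^\top A$, which is where the ``symmetric'' hypothesis is consumed), whence $AK_{m,n}=K_{m,n}A$, and the expansion via $K_{m,n}^2=nK_{m,n}$ and $K_{m,n}J_v=nJ_v$ yields the stated coefficients. All intermediate identities check out, including $AJ_v=J_vA=kJ_v$, which follows from the diagonals of $AA^\top$ and $A^\top A$ both being $k$.

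One small caveat about your closing remark: when $\lambda_1=\lambda_2=\lambda$ the conclusion does not become \emph{trivial}---it remains the substantive assertion $AK_{m,n}A^\top=(k-\lambda)K_{m,n}+n\lambda J_v$, and this is generally \emph{false}, since nothing then ties $A$ to the group partition (for a symmetric $2$-design the partition into groups can be chosen arbitrarily relative to $A$, and $A$ need not commute with $K_{m,n}$). So the hypothesis $\lambda_1\neq\lambda_2$ is indeed essential, but for the reason that the conclusion can fail without it, not because it degenerates. This side remark does not affect the validity of your proof of the lemma as stated.
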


\subsection{Affine resolvable designs}\label{sec:ard} 
A \defn{$2$-$(v,b,r,k,\lambda)$ design} or simply a \defn{$2$-design} is a pair $(V,\mathcal{B})$ where $V$ is a $v$-set called the vertex set and $\mathcal{B}$ is a collection of $b$ $k$-subsets of $V$ such that each element of $v$ is contained in exactly $r$ blocks and any $2$-subset of $V$ is contained in exactly $\lambda$ blocks. 
A \emph{parallel class} in a $2$-design is a set of blocks that partitions the vertex set $V$. 
A \defn{resolvable design} is a $2$-design whose blocks can be partitioned into parallel classes. 
An \defn{affine resolvable design} is a resolvable $2$-design  with the property that any two blocks in different parallel classes intersect in a constant number of points. 

Let $(V,\mathcal{B})$ be an affine resolvable $2$-$(v,b,r,k,\lambda)$ design with parallel classes $\Pi_1,\ldots,\Pi_r$. 
Set $\mu=k^2/v$. 
For $i\in\{1,\ldots,r\}$, we define a $v\times v$ $(0,1)$-matrix $C_i$ indexed by the elements of $V$ so that $(x,y)$-entry is $1$ if there exists a block $B\in \Pi_i$ such that $x,y\in B$, $0$ otherwise. 
Then we readily obtain the following.   
\begin{lemma}\label{lem:ard1}
\begin{enumerate}
\item $\sum_{i=1}^{r}C_i=(r-\lambda) I_{v}+\lambda J_{v}$ holds. 
\item For any $i\in\{1,\ldots,r\}$, $C_i C_i^\top=k C_i$ holds. 
\item For any distinct $i,j\in\{1,\ldots,r\}$, $C_iC_j^\top=\mu J_{v}$ holds.
\end{enumerate}
\end{lemma}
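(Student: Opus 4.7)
The plan is to write $C_i$ in the compact form
\[
C_i=\sum_{B\in\Pi_i}\chi_B\chi_B^\top,
\]
where $\chi_B\in\{0,1\}^V$ is the indicator vector of the block $B$. This is justified because, for $x\neq y$, at most one block of the parallel class $\Pi_i$ can contain both points (the blocks in $\Pi_i$ partition $V$), and for $x=y$ exactly one block does. Two properties of parallel classes will then do almost all the work: first, since $\Pi_i$ partitions $V$, one has $\sum_{B\in\Pi_i}\chi_B=\mathbf{1}_V$ (the all-ones vector) and $\chi_B^\top\chi_{B'}=k\,\delta_{B,B'}$ for $B,B'\in\Pi_i$; second, by affine resolvability, $\chi_B^\top\chi_{B'}=|B\cap B'|=\mu$ whenever $B\in\Pi_i$, $B'\in\Pi_j$ with $i\ne j$.

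For (i), I would compute $\sum_{i=1}^r C_i$ entrywise. On the diagonal each $(C_i)_{x,x}=1$ (every point lies in exactly one block of each parallel class), contributing $r$. Off the diagonal, $(C_i)_{x,y}=1$ iff some block of $\Pi_i$ contains both $x$ and $y$; summing over $i$ counts the total number of blocks containing the pair $\{x,y\}$, which is $\lambda$ by definition of a $2$-design. This yields $rI_v+\lambda(J_v-I_v)=(r-\lambda)I_v+\lambda J_v$.

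For (ii), using the formula for $C_i$ and $C_i^\top=C_i$,
\[
C_iC_i^\top=\sum_{B,B'\in\Pi_i}\chi_B\bigl(\chi_B^\top\chi_{B'}\bigr)\chi_{B'}^\top
=\sum_{B\in\Pi_i}k\,\chi_B\chi_B^\top=kC_i,
\]
the cross terms vanishing because distinct blocks in a parallel class are disjoint. For (iii), the analogous computation for $i\ne j$ replaces the inner product by the constant intersection number $\mu$:
\[
C_iC_j^\top=\mu\sum_{B\in\Pi_i}\sum_{B'\in\Pi_j}\chi_B\chi_{B'}^\top
=\mu\Bigl(\sum_{B\in\Pi_i}\chi_B\Bigr)\Bigl(\sum_{B'\in\Pi_j}\chi_{B'}\Bigr)^\top
=\mu\,\mathbf{1}_V\mathbf{1}_V^\top=\mu J_v.
\]

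There is no real obstacle; the only point worth double-checking is the value $\mu=k^2/v$, which is forced (rather than merely assumed) once one knows the intersection of two non-parallel blocks is constant: fixing $B\in\Pi_i$ and summing $|B\cap B'|$ over the $v/k$ blocks $B'$ of another class $\Pi_j$ must give $k$, so $|B\cap B'|=k^2/v$. This is implicit in the author's definition of $\mu$ and is what makes step (iii) come out cleanly.
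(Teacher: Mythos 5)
Your proof is correct, and since the paper states this lemma with no proof at all (``Then we readily obtain the following''), your indicator-vector computation $C_i=\sum_{B\in\Pi_i}\chi_B\chi_B^\top$ is precisely the routine verification the authors leave to the reader. Your closing observation that $\mu=k^2/v$ is forced by counting $\sum_{B'\in\Pi_j}|B\cap B'|=k$ over the $v/k$ blocks of another class also agrees with the paper, which simply sets $\mu=k^2/v$ before the lemma.
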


Conversely,  we will characterize affine resolvable designs in terms with $(0,1)$-matrices $C_1,\ldots,C_r$ satisfying Lemma~\ref{lem:ard1}. 
Let $v,k,r,\lambda,\mu$ be positive integers,
and assume that $C_1,\ldots,C_r$ are $v\times v$ $(0,1)$-matrices such that 
\begin{enumerate}
\item $\sum_{i=1}^{r}C_i=(r-\lambda) I_{v}+\lambda J_{v}$ holds,  
\item for any $i\in\{1,\ldots,r\}$, $C_iC_i^\top=k C_i$ holds, and 
\item for any distinct $i,j\in\{1,\ldots,r\}$, $C_i C_{j}^\top=\mu J_{v}$ holds.
\end{enumerate}
We call $C_1,\ldots,C_r$ the \emph{auxiliary matrices}.
Furthermore, we set $C_0=O_v$. 
We will now show that these matrices imply the existence of an affine resolvable design. 
First we derive the formula for the parameters. 
The property (i) implies that each $C_i$ has entries $1$ on diagonal.  
Thus this with the property (ii) shows that each row of $C_i$ has constant sums, namely $C_i J_v=J_v C_i=k J_v$ for any $i$. 
Multiplying the equation in (i) by $J_v$ yields $rk J_v=(r-\lambda+\lambda v)J_v$, namely 
\begin{align}\label{eq:p1}
r k=r-\lambda+\lambda v. 
\end{align}
Similarly, multiplying the equation (iii) by $J_v$ yields 
\begin{align}\label{eq:p2}
k^2=\mu v.
\end{align}
Next, we  multiply the equation (i) by $C_j^\top$ to obtain
\begin{align*}
kC_j+(\lambda-r)C_j^\top=(k\lambda -(r-1)\mu)J_v.
\end{align*} 
Noting that (ii) implies that each $C_i$ is symmetric, we have 
\begin{align*}
(k+\lambda-r)C_j=(k\lambda-(r-1)\mu)J_v.
\end{align*} 
Since none of $C_1,\ldots,C_r$ is the all-ones matrix, we obtain
\begin{align}
k+\lambda-r=0,\label{eq:p3}\\ 
k\lambda-(r-1)\mu=0. \label{eq:p4}
\end{align}
By letting $n=k/\mu$, the equations \eqref{eq:p1}, \eqref{eq:p2}, \eqref{eq:p3}, \eqref{eq:p4} show that 
\begin{align*}
v&=n^2\mu, \quad k=n\mu,\quad \lambda=\frac{n\mu-1}{n-1},\quad r=\frac{n^2\mu-1}{n-1}.
\end{align*}

It is indeed possible to characterize affine resolvable designs in terms of the auxiliary matrices as follows. 
Since $C_i$ is a symmetric $(0,1)$-matrix with all diagonal entries equal to one and $C_i^2=k C_i$, 
there exists some permutation matrix $P$ such that $C_i=P^\top (I_{v/k}\otimes J_k) P$.   
Define $k$-subsets $B_{i,1},\ldots,B_{i,v/k}$ of $\{1,\ldots,v\}$ such that $(C_i)_{xy}=1$ if and only if $\{x,y\}\subset B_{i,j}$ for some $j$. 
Then it is easy to see that the set of $B_{i,j}$'s forms an affine resolvable $2$-design with parallel classes $\{B_{i,1},\ldots,B_{i,v/k}\}$  $(i=1,\ldots,r)$.

Examples of affine resolvable designs include those obtained from Hadamard
matrices and Desarguesian geometries. Thanks to an anonymous referee for
pointing out to us that there are also  affine resolvable designs obtained 
from non-Desarguesian affine planes \cite{non-des-affine}. 
Next we introduce the auxiliary matrices corresponding to the first two families of affine resolvable designs.
\begin{example}[Hadamard matrices]\label{ex:h}
Let $n\geq 4$ be an integer. 
Let $H$ be a Hadamard matrix of order $n$ with rows $r_0={\bf 1}^\top,r_1,\ldots,r_{n-1}$, where ${\bf 1}$ is the all-ones column vector.  
We define $C_i=\frac{1}{2}(r_i^\top r_i+J_n)$ for $i=1,\ldots,n-1$. 
Then:   
\begin{enumerate}
\item $\sum_{i=1}^{n-1}C_i=\frac{n}{2} I_{n}+\frac{n-2}{2}J_{n}$ holds. 
\item For any $i\in\{1,\ldots,n-1\}$, $C_iC_i^\top=\frac{n}{2} C_i$ holds. 
\item For any distinct $i,j\in\{1,\ldots,n-1\}$, $C_i C_j^\top=\frac{n}{4}J_{n}$ holds.
\end{enumerate}
\end{example}

\begin{example}[Affine geometries]\label{ex:ag}
Let $q$ be a prime power, $d$ a positive integer, and $V$ the $(d+1)$-dimensional vector space over the field $GF(q)$. 
We will call subspaces of dimension $d$ of $V$ hyperplanes, and their cosets $d$-flats. 
If $H$ is a hyperplane and $x,y\in V$, we will call $d$-flats $H+x$ and $H+y$ parallel. 
The space $V$ contains $r:=(q^{d+1}-1)/(q-1)$ hyperplanes, which we denote $H_1,\ldots,H_r$. 
All $d$-flats parallel to $H_i$ form a parallel class $\Pi_i$, $|\Pi_i|=q$.

Define a $q^{d+1}\times q^{d+1}$ $(0,1)$-matrix $C_i$ with rows and columns indexed by the elements of $V$ by 
\begin{align*}
(C_i)_{x,y}=\begin{cases}
1 & \text{ if } y-x\in H_i,\\
0 & \text{ otherwise}.
\end{cases}
\end{align*}
Then:   
\begin{enumerate}
\item $\sum_{i=1}^{r}C_i= q^{2d} I_{q^{d+1}}+(r-1)q^{d-1} J_{q^{d+1}}$ holds. 
\item For any $i\in\{1,\ldots,r\}$, $C_i C_i^\top=q^d C_i$ holds. 
\item For any distinct $i,j\in\{1,\ldots,r\}$, $C_iC_j^\top=q^{d-1} J_{q^{d+1}}$ holds.
\end{enumerate}
\end{example}


\subsection{Mutually Orthogonal Latin Squares (MOLS)}
We start this section with the variant of definition of orthogonal Latin squares used in this paper. 
A Latin square $L$ of size $n$ on the symbol set $\{1,\ldots,n\}$ is an $n\times n$ matrix with entries in $\{1,\ldots,n\}$ such that each row and each column has exactly one element in  $\{1,\ldots,n\}$. 
Two Latin squares $L_1$ and $L_2$ of size $n$ on the symbol set $\{1,\ldots,n\}$ are called to be 
{\it  orthogonal},  if every superimposition of each row of $L_1$ on each row of $L_2$ results in
only one element of the form $(a,a)$. In effect,  each permutation of symbols between the rows of the two Latin squares has a unique fixed symbol.
A set of Latin squares in which every distinct pair of Latin squares are orthogonal 
is called
\emph{Mutually Orthogonal   Latin Squares}(MOLS) \cite{HBCD}.
Note that different terminologies were used in \cite{HKO}, \cite{KS} and elsewhere for MOLS, depending on the way that they were used.

For the sake of completeness, we need the following two well known lemmas in order to introduce the following Notation~\ref{def:ls} below. 
\begin{lemma}\label{lem:sl}
Let  $L_1,L_2$ be MOLS on the set $\{1,\ldots,n\}$ with the $(i,j)$-entry equal to $l(i,j),l'(i,j)$ respectively.  
An $n\times n$ array with the $(i,j)$-entry equal to $b$ determined by $b=l(i,a)=l'(j,a)$ for the unique $a\in\{1,\ldots,n\}$, is a Latin square.
\end{lemma}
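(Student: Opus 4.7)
The plan is to verify two things: first, that for every pair $(i,j)$ the defining index $a$ exists and is unique, so the array is well-defined; and second, that each row and each column of the resulting array is a permutation of $\{1,\ldots,n\}$, i.e., it is a Latin square.

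Well-definedness follows directly from the orthogonality hypothesis as phrased in the paper: superimposing row $i$ of $L_1$ on row $j$ of $L_2$ produces the $n$ pairs $(l(i,c),l'(j,c))$ for $c=1,\ldots,n$, and by hypothesis exactly one of these pairs has equal coordinates. The column index $c$ at which this coincidence occurs is the unique $a=a(i,j)$, and the common coordinate $b=b(i,j)=l(i,a)=l'(j,a)$ is then unambiguously defined.

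For the Latin square property, I will argue by contradiction, first on rows and then symmetrically on columns. Fix a row $i$ and suppose there exist distinct $j_1,j_2$ with $b(i,j_1)=b(i,j_2)$, i.e.\ $l(i,a(i,j_1))=l(i,a(i,j_2))$. Since row $i$ of $L_1$ is a permutation of $\{1,\ldots,n\}$, this forces $a(i,j_1)=a(i,j_2)=:a$. Then $l'(j_1,a)=l(i,a)=l'(j_2,a)$, and since column $a$ of $L_2$ is also a permutation, $j_1=j_2$, a contradiction. Hence the $n$ entries in row $i$ are distinct, so they exhaust $\{1,\ldots,n\}$. For columns, fix $j$ and suppose $b(i_1,j)=b(i_2,j)$ with $i_1\neq i_2$; applying the same argument with the roles of $L_1$ and $L_2$ interchanged (use that row $j$ of $L_2$ is a permutation to collapse the two $a$-values, then that column $a$ of $L_1$ is a permutation to force $i_1=i_2$) gives a contradiction.

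The argument is short and the main subtlety is only notational: one must parse the nonstandard definition of orthogonality used in this paper, in which a single \emph{row} of $L_1$ superimposed on a single \emph{row} of $L_2$ has exactly one fixed coincidence. Once that is internalized, the proof reduces to two applications of the elementary fact that rows and columns of a Latin square are permutations of the symbol set.
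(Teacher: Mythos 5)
Your proof is correct: you establish well-definedness of the index $a(i,j)$ directly from the paper's row-by-row formulation of orthogonality, and then use the permutation property of the rows and columns of $L_1$ and $L_2$ twice to show distinctness of entries along each row and each column of the new array. The paper dismisses this lemma with the single word ``Routine,'' and your argument is precisely the routine verification the authors intended, so you have simply supplied the omitted details in the expected way.
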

\begin{proof}
Routine. 
\end{proof}

\begin{lemma}\upshape{\cite[Lemma~2.5]{KS}}\label{lem:sl1}
Let $L_1,L_2,L_3$ be any MOLS on the set $\{1,\ldots,n\}$, and $L_{i,j}$ {\upshape(}$i,j\in\{1,2,3\},i\neq j${\upshape)} the Latin square obtained from $L_i$ and $L_j$ in this ordering by Lemma~\ref{lem:sl}. 
Then 
$L_{1,3}$ and $L_{2,3}$ are MOLS, and the Latin square obtained from $L_{1,3}$ and $L_{2,3}$ in this ordering equals to $L_{1,2}$. 
\end{lemma}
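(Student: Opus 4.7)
The plan is to unfold the construction of $L_{I,J}$ from Lemma~\ref{lem:sl} and chase indices. Recall that $L_{I,J}$ has $(p,q)$-entry equal to the unique symbol $b$ satisfying $b=L_I(p,a)=L_J(q,a)$ for the unique column $a$ where rows $p$ of $L_I$ and $q$ of $L_J$ agree. I will apply this recipe three times and tie the resulting indices together using the orthogonality of the original triple together with the fact that each row and each column of a Latin square is a permutation of $\{1,\ldots,n\}$.

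To verify that $L_{1,3}$ and $L_{2,3}$ are orthogonal, I would fix row $i$ of $L_{1,3}$ and row $j$ of $L_{2,3}$ and exhibit a unique column $k$ with $L_{1,3}(i,k)=L_{2,3}(j,k)$. Suppose such a $k$ exists with common value $b$. Unfolding the two definitions produces columns $a$ and $a'$ satisfying $L_1(i,a)=L_3(k,a)=b$ and $L_2(j,a')=L_3(k,a')=b$. Since row $k$ of $L_3$ is a permutation of $\{1,\ldots,n\}$, the equality $L_3(k,a)=L_3(k,a')$ forces $a=a'$, so a single column $a$ witnesses $L_1(i,a)=L_2(j,a)=L_3(k,a)=b$ in all three squares at once.

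Now the orthogonality of $L_1$ and $L_2$ pins down $a$ as the unique column $a_0$ where rows $i$ of $L_1$ and $j$ of $L_2$ agree, with common value $c$, so $b=c$. Because column $a_0$ of $L_3$ is also a permutation of $\{1,\ldots,n\}$, the row index $k$ with $L_3(k,a_0)=c$ is uniquely determined as some $k_0$. Reading the chain in the opposite direction and invoking the orthogonality of $L_1,L_3$ and of $L_2,L_3$ confirms that this $k_0$ does satisfy $L_{1,3}(i,k_0)=L_{2,3}(j,k_0)=c$, so the column $k$ both exists and is unique. Hence $L_{1,3}$ and $L_{2,3}$ are MOLS, and the $(i,j)$-entry of the Latin square they produce via Lemma~\ref{lem:sl} is the common value $c=L_1(i,a_0)=L_2(j,a_0)$, which is precisely the $(i,j)$-entry of $L_{1,2}$.

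The main obstacle is purely notational: four indices $(i,j,k,a)$ and three Latin squares must be shepherded through several applications of the "unique column" property, and one has to invoke the permutation property at precisely the right moments, first along a row of $L_3$ to collapse $a=a'$, and then along the column $a_0$ of $L_3$ to pin down $k=k_0$.
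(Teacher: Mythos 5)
Your proof is correct and complete. Note that the paper itself does not prove this lemma: it is imported verbatim from \cite[Lemma~2.5]{KS}, so there is no in-paper argument to compare against; your index-chasing proof is exactly the natural one for the row-wise notion of orthogonality used here. All the key moves are sound and placed where they are needed: the row-permutation property of $L_3$ legitimately collapses $a=a'$, orthogonality of $L_1,L_2$ then pins down the unique agreement column $a_0$ and common value $c$ (forcing $b=c$), the column-permutation property of $L_3$ determines $k_0$ uniquely, and the existence check correctly uses orthogonality of the pairs $(L_1,L_3)$ and $(L_2,L_3)$ to certify that $a_0$ is the unique witnessing column in both unfoldings, giving $L_{1,3}(i,k_0)=L_{2,3}(j,k_0)=c=L_{1,2}(i,j)$.
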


Now we use the following notion on Latin squares, which was introduced in \cite{KS}. 
\begin{notation}\label{def:ls}
Let $f\geq 3$ be an integer.
Let $L_{i,j}$ ($i,j\in\{1,\ldots,f\},i\neq j$) be Latin squares on the same symbol set. 
Then $L_{i,j}$ ($i,j\in\{1,\ldots,f\},i\neq j$) are said to be \emph{linked MOLS} if for any distinct $i,j,k\in\{1,\ldots,f\}$, 
$L_{i,k}$ and $L_{j,k}$ are MOLS and the Latin square obtained from $L_{i,k}$ and $L_{j,k}$ in this ordering via Lemma~\ref{lem:sl} coincides with $L_{i,j}$.
\end{notation}

It is standard practice to obtain MOLS from finite fields. 
In the following construction, the resulting MOLS satisfy an additional condition described in Proposition~\ref{prop:mslsff} (ii) below. 
Let $\mathbb{F}_{q}$ be the finite field of $q$ elements $\alpha_1=0,\alpha_2,\ldots,\alpha_{q}$. 
Let $S$ be the subtraction table, i.e., $S=(\alpha_i-\alpha_j)_{i,j=1}^{q}$. 
For $k\in\{2,\ldots,q\}$, set $S_k=(\alpha_k(\alpha_i-\alpha_j))_{i,j=1}^{q}$. 
For distinct $k,k'\in\{2,\ldots,q\}$, let $S_{k,k'}$ denote the Latin square obtained from $S_k$ and $S_{k'}$ in this ordering.  
\begin{proposition}\upshape{\cite[Proposition~2.7]{KS}}\label{prop:mslsff}
\begin{enumerate}
\item The matrices $S_k$ {\upshape(}$k\in\{2,\ldots,q\}${\upshape)} are MOLS. 
\item If $q=2^n$, the Latin square obtained from $S_k$ and $S_{k,k'}$ is $S_{k'}$ for distinct $k,k'\in\{2,\ldots,2^n\}$.
\end{enumerate} 
\end{proposition}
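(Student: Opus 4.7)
The plan is to verify both parts by direct computation using the finite-field structure of the entries $\alpha_k(\alpha_i-\alpha_j)$.

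For part (i), I would first note that each $S_k$ is a Latin square because the subtraction table $S=(\alpha_i-\alpha_j)_{i,j}$ is itself Latin (the map $c\mapsto \alpha_i-\alpha_c$ is a bijection of $\mathbb{F}_q$ for every fixed $i$, and similarly for columns) and multiplying all entries by the nonzero scalar $\alpha_k$ preserves this property. To check orthogonality of $S_k$ and $S_{k'}$ in the paper's sense for $k\neq k'$, I would fix arbitrary rows $i$ and $j$ and solve $\alpha_k(\alpha_i-\alpha_c)=\alpha_{k'}(\alpha_j-\alpha_c)$ for $\alpha_c$. Rearranging gives $(\alpha_k-\alpha_{k'})\alpha_c=\alpha_k\alpha_i-\alpha_{k'}\alpha_j$, and since $\alpha_k\neq\alpha_{k'}$ this has the unique solution $\alpha_c=(\alpha_k\alpha_i-\alpha_{k'}\alpha_j)/(\alpha_k-\alpha_{k'})$, so exactly one column produces a match, which is the orthogonality condition.

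For part (ii), the key intermediate step is to derive an explicit formula for $S_{k,k'}$. Substituting the unique $\alpha_c$ from (i) back into $S_k[i,c]=\alpha_k(\alpha_i-\alpha_c)$ and simplifying should yield
\[
S_{k,k'}[i,j]=\frac{\alpha_k\alpha_{k'}}{\alpha_{k'}-\alpha_k}\,(\alpha_i-\alpha_j),
\]
so that, writing $\gamma=\alpha_k\alpha_{k'}/(\alpha_{k'}-\alpha_k)\in\mathbb{F}_q^{\times}$, the matrix $S_{k,k'}$ again has the form $\gamma(\alpha_i-\alpha_j)$. In characteristic two, $\alpha_{k'}-\alpha_k=\alpha_k+\alpha_{k'}$ is automatically symmetric in $k$ and $k'$, which is where the hypothesis $q=2^n$ keeps the bookkeeping clean. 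Applying Lemma~\ref{lem:sl} once more to the pair $(S_k,S_{k,k'})$, using the same formula but with $\gamma$ in place of $\alpha_{k'}$, the resulting Latin square will have entries $\gamma'(\alpha_i-\alpha_j)$ with $\gamma'=\alpha_k\gamma/(\gamma-\alpha_k)$. A short simplification using $\gamma-\alpha_k=\alpha_k^2/(\alpha_{k'}-\alpha_k)$ and $\alpha_k\gamma=\alpha_k^2\alpha_{k'}/(\alpha_{k'}-\alpha_k)$ should collapse $\gamma'$ to $\alpha_{k'}$, so that the Latin square obtained from $S_k$ and $S_{k,k'}$ is precisely $S_{k'}$.

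The main obstacle is purely algebraic bookkeeping; there is no deep conceptual step. The essential observation is that the one-parameter family $\{\gamma(\alpha_i-\alpha_j):\gamma\in\mathbb{F}_q^{\times}\}$ is closed under the construction of Lemma~\ref{lem:sl}, after which everything reduces to tracking how the scalar $\gamma$ transforms under the operation; the characteristic-two hypothesis ensures this transformation lands back on the labeled element $\alpha_{k'}$ rather than on a sign-twisted variant.
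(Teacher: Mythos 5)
Your argument is correct, but note first that the paper does not prove this proposition at all: it is imported verbatim from \cite[Proposition~2.7]{KS}, so there is no in-paper proof to compare against, and your computation in effect supplies the missing verification. Part (i) is fine: Latinness of $S=(\alpha_i-\alpha_j)$ survives multiplication by $\alpha_k\neq 0$, and your unique solution $\alpha_c=(\alpha_k\alpha_i-\alpha_{k'}\alpha_j)/(\alpha_k-\alpha_{k'})$ is exactly the paper's row-against-row orthogonality condition (it also covers $i=j$, where it gives $\alpha_c=\alpha_i$). Part (ii) is also right, and your structural observation---closure of the family $\{\gamma(\alpha_i-\alpha_j):\gamma\in\mathbb{F}_q^{\times}\}$ under the operation of Lemma~\ref{lem:sl}---is the heart of the matter; in reciprocal form the scalar transforms by $1/\gamma=1/\alpha_k-1/\alpha_{k'}$ and then $1/\gamma'=1/\alpha_k-1/\gamma=1/\alpha_{k'}$. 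Two caveats. First, your explanation of where $q=2^n$ enters is mistaken: as the reciprocal identity makes plain, your computation is valid over every $\mathbb{F}_q$, and $\gamma'$ collapses to $\alpha_{k'}$ (not to any ``sign-twisted variant'') in all characteristics, so the hypothesis is simply unused in your proof---which is harmless, since you then prove something stronger than stated; what characteristic two actually buys is presumably the symmetry $S_{k,k'}=S_{k',k}$, since $1/\gamma$ changes sign when $k$ and $k'$ are swapped in odd characteristic, a property relevant when these squares are assembled into linked MOLS as in Notation~\ref{def:ls}, but not needed for the ordered statement (ii). Second, before applying Lemma~\ref{lem:sl} to the pair $(S_k,S_{k,k'})$ you must know they are orthogonal, i.e.\ that $\gamma\neq\alpha_k$; your own identity $\gamma-\alpha_k=\alpha_k^2/(\alpha_{k'}-\alpha_k)\neq 0$ supplies exactly this, but it deserves an explicit sentence rather than being left implicit.
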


\section{Symmetric group divisible designs having partial complements}
In this section, we will deal with the incidence matrix $A$ of a symmetric group divisible design with parameters $(v,k,m,n,\lambda_1,\lambda_2)$ such that $A+K_{m,n}$ is a symmetric group divisible design with parameters $(v,k',m,n,\lambda'_1,\lambda'_2)$. 
Then the matrix $A'=J_v-K_{m,n}-A$ is a symmetric group divisible design with parameters $(v,v-k-n,m,n,v-n-2k+\lambda_1,v-2n-2k+\lambda_2+\frac{2k}{m-1})$, and is called a \emph{partial complement} of $A$.

The following lemma is needed later on for symmetric group divisible designs with the specific property mentioned below. 
This lemma shows that the parameters of $A+K_{m,n}$ are uniquely determined from the parameters of $A$ and that $A$ has some block structure.  
\begin{lemma}\label{lem:sd}
Assume that $m>1$ and $n>1$.  
Let $A$ be a symmetric group divisible design with parameters $(v,k,m,n,\lambda_1,\lambda_2)$, and assume that $A+K_{m,n}$ is  a symmetric group divisible design with parameters $(v,k',m,n,\lambda'_1,\lambda'_2)$.
Then $A K_{m,n}=K_{m,n}A=\frac{k}{m-1}(J_v-K_{m,n})$, $k'=k+n$, $\lambda'_1=\lambda_1+n$, and $\lambda'_2=\lambda_2+\frac{2k}{m-1}$. 
\end{lemma}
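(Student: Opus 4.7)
The plan is to expand $(A+K_{m,n})(A+K_{m,n})^\top$, compare it with the GDD identity postulated for $A+K_{m,n}$, peel off the parameter identities one at a time, and then repeat the analogous computation with the transposed product to pin down $AK_{m,n}$ and $K_{m,n}A$ separately. The first easy observation is that $A+K_{m,n}$ being a $(0,1)$-matrix forces $A$ to vanish in every position where $K_{m,n}$ has a one; applying both sides of the GDD identity to $\mathbf{1}_v$ then gives $k' = k + n$ using $AJ_v = kJ_v$ and $K_{m,n}J_v = nJ_v$.

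For the main computation I would expand
\[
(A+K_{m,n})(A+K_{m,n})^\top = AA^\top + AK_{m,n} + K_{m,n}A^\top + K_{m,n}^2,
\]
insert $K_{m,n}^2 = nK_{m,n}$, substitute the two GDD identities, and isolate $AK_{m,n}+K_{m,n}A^\top$ as a linear combination of $I_v$, $K_{m,n}-I_v$ and $J_v-K_{m,n}$. Since $AK_{m,n}$ and $K_{m,n}A^\top$ both vanish on block-diagonal positions (using that $A$ does), comparing the extracted right-hand side on those positions pins down $\lambda'_1 = \lambda_1 + n$ and reconfirms $k' = k + n$, leaving $AK_{m,n}+K_{m,n}A^\top = (\lambda'_2-\lambda_2)(J_v-K_{m,n})$; taking row sums ($2nkJ_v$ on the left, $(\lambda'_2-\lambda_2)(v-n)J_v$ on the right) then gives $\lambda'_2-\lambda_2 = \tfrac{2k}{m-1}$.

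Running the same expansion on $(A+K_{m,n})^\top(A+K_{m,n})$ yields the companion identity $A^\top K_{m,n}+K_{m,n}A = \tfrac{2k}{m-1}(J_v-K_{m,n})$; subtracting it from the previous one shows that $A-A^\top$ commutes with $K_{m,n}$. Combined with the block-zero structure of $A$, I expect this to force that within each off-diagonal block $(\alpha,\beta)$ of $A$ the row sums are constant and are symmetric against the corresponding $(\beta,\alpha)$ block, so that $AK_{m,n}$ is a scalar multiple of $J_v-K_{m,n}$; the row-sum identity $AK_{m,n}J_v = nkJ_v$ then determines the scalar as $\tfrac{k}{m-1}$, and the same argument applied with the roles of $A$ and $A^\top$ exchanged gives $K_{m,n}A = \tfrac{k}{m-1}(J_v-K_{m,n})$.

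The step I expect to be the main obstacle is this final splitting: the block-form expansion pins down only the symmetric sum $AK_{m,n}+K_{m,n}A^\top$, and decoupling it into its two summands needs an extra argument tying the $(\alpha,\beta)$ and $(\beta,\alpha)$ block counts of $A$ together. If the commutation identity for $A-A^\top$ is not on its own sufficient, I would invoke Bose's lemma (Lemma~\ref{lem:b}) both for $A$ and for $A+K_{m,n}$, so that $AK_{m,n}A^\top$ has a known closed form; combining this with the symmetric-sum identity should extract $AK_{m,n}$ uniquely.
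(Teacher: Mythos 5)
You follow the paper's own route step for step through most of the argument: the paper likewise gets $k'=k+n$ and $\lambda'_1=\lambda_1+n$ by direct counting, expands $(A+K_{m,n})(A^\top+K_{m,n})$ in two ways to isolate $AK_{m,n}+K_{m,n}A^\top=(\lambda'_2-\lambda_2)(J_v-K_{m,n})$, and then pins down $\lambda'_2-\lambda_2=\frac{2k}{m-1}$ by a row-count; all of that part of your write-up is correct and essentially identical to the published proof.

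The genuine gap is exactly at the final decoupling you flagged, and neither of your two proposed repairs can close it. Write $A=(B_{i,j})_{i,j=1}^m$ in $n\times n$ blocks and $K=K_{m,n}$. From $B_{i,j}J_n+J_nB_{j,i}^\top=(\lambda'_2-\lambda_2)J_n$ (a row-constant matrix plus a column-constant matrix is constant only if each summand is) every block has constant row and column sum $r_{i,j}$, with $r_{i,j}+r_{j,i}=\lambda'_2-\lambda_2$ --- but nothing so far forces $r_{i,j}=r_{j,i}$. Your first repair is empty: since row sums equal column sums blockwise, the commutation $(A-A^\top)K=K(A-A^\top)$ reads $(r_{i,j}-r_{j,i})J_n=(r_{i,j}-r_{j,i})J_n$ in each block, an identity. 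Your second repair also fails: expanding $(A+K)K(A^\top+K)=AKA^\top+n(AK+KA^\top)+n^2K$ and substituting $k'=k+n$, $\lambda'_1=\lambda_1+n$, $\lambda'_2=\lambda_2+\frac{2k}{m-1}$, Bose's lemma (Lemma~\ref{lem:b}) for $A+K$ reduces \emph{exactly} to Bose's lemma for $A$, so the pair of applications yields only $AKA^\top=\gamma K+n\lambda_2J_v$ with $\gamma=n(\lambda_1-\lambda_2)+k-\lambda_1$. Writing $AK=R\otimes J_n$, this says $RR^\top=\gamma I_m+n\lambda_2J_m$, which does not single out $R=\frac{k}{m-1}(J_m-I_m)$: the perturbation $R=\frac{k}{m-1}(J_m-I_m)+S$ with $S$ skew-symmetric, $SJ_m=O$, satisfies every identity used here provided $SS^\top=\bigl(\gamma-\frac{k^2}{(m-1)^2}\bigr)I_m$ on the complement of the all-ones vector. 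A nonzero skew part is excluded only when $\gamma=\frac{k^2}{(m-1)^2}$, i.e.\ $k+(n-1)\lambda_1=\frac{k^2}{m-1}$, which is precisely the formula of Proposition~\ref{prop:psd} --- and in this paper that proposition is \emph{deduced from} the present lemma, so invoking it would be circular. (Bose's lemma moreover assumes $\lambda_1\neq\lambda_2$, so the symmetric-design case escapes it entirely.)

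It is only fair to add that the published proof is silent at the same joint: it passes from $B_{i,j}J_n+J_nB_{j,i}^\top=(\lambda'_2-\lambda_2)J_n$ to $B_{i,j}J_n=J_nB_{i,j}=\frac{\lambda'_2-\lambda_2}{2}J_n$ in a single line, although the displayed equation by itself gives only the pair-sum condition. So you have located the one genuinely delicate step of the lemma; but your proposal, as written, does not supply the missing argument, and the two specific tools you name provably cannot.
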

\begin{proof}
Counting the number of $1$ of $A$ and $A+K_{m,n}$ in each row, we have $k'=k+n$. 
Considering the inner product between two rows in a same group, we have $\lambda'_1=\lambda_1+n$. 
Next, we calculate $(A+K_{m,n})(A^\top +K_{m,n})$ in two ways. 
On the one hand, since $A$ is a symmetric group divisible design with parameters $(v,k,m,n,\lambda_1,\lambda_2)$, one has
\begin{align}
(A+K_{m,n})(A^\top+K_{m,n})&=A A^\top +A K_{m,n}+K_{m,n}A^\top+(K_{m,n})^2\nonumber\\
&=A K_{m,n}+K_{m,n}A^\top+(k-\lambda_1)I_v+(\lambda_1+n-\lambda_2)K_{m,n}+\lambda_2 J_v.\label{eq:ak1}
\end{align}
On the other hand, since $A+K_{m,n}$ is  a symmetric group divisible design with parameters $(v,k',m,n,\lambda'_1,\lambda'_2)$, one has
\begin{align}\label{eq:ak2}
(A+K_{m,n})(A^\top+K_{m,n})&=(k'-\lambda'_1)I_v+(\lambda'_1-\lambda'_2)K_{m,n}+\lambda'_2 J_v\nonumber\\
&=(k-\lambda_1)I_v+(\lambda_1+n-\lambda'_2)K_{m,n}+\lambda'_2 J_v. 
\end{align} 
Equations \eqref{eq:ak1} and \eqref{eq:ak2} show 
\begin{align}
A&K_{m,n}+K_{m,n}A^\top=(\lambda'_2-\lambda_2)(J_v-K_{m,n}).\label{eq:1}
\end{align}
Since the diagonal blocks of $A$ are the zero matrix, 
$A K_{m,n}$ and $K_{m,n}A^\top$ have zero diagonal blocks. 
Furthermore, if we let $A=(B_{i,j})_{i,j=1}^m$ where each $B_{i,j}$ is an $n\times n$ matrix, then Equation~\eqref{eq:1} implies that $B_{i,j}J_n+J_n B_{j,i}^\top=(\lambda'_2-\lambda_2) J_n$ for any distinct $i,j$. 
Therefore $B_{i,j}J_n=J_nB_{i,j}=\frac{\lambda'_2-\lambda_2}{2}J_n$ for any distinct $i,j$. 
Thus we have 
\begin{align*}
A K_{m,n}=K_{m,n}A=\frac{\lambda'_2-\lambda_2}{2}(J_v-K_{m,n}). 
\end{align*}

Finally, since the number of $1$'s in each row of  $A$ is $k$, we have $\lambda'_2-\lambda_2=\frac{2k}{m-1}$. 
\end{proof}

\begin{proposition}\label{prop:psd}
Assume that $m>1$ and $n>1$.  
Let $A$ be a symmetric group divisible design with parameters $(v,k,m,n,\lambda_1,\lambda_2)$ such that $A+K_{m,n}$ is a symmetric group divisible design. 
Then:
\begin{align*}
\lambda_1=\frac{k(k-m+1)}{(m-1)(n-1)},\quad \lambda_2=\frac{k^2(m-2)}{n(m-1)^2}.
\end{align*}
\end{proposition}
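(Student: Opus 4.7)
The plan is to combine the commutation identity for $K_{m,n}$ from Lemma~\ref{lem:sd} with Bose's identity in Lemma~\ref{lem:b}, both evaluated on the product $AK_{m,n}A^\top$, and then read off $\lambda_1,\lambda_2$ by matching coefficients in the basis $\{I_v,\,K_{m,n},\,J_v\}$ of the Bose--Mesner algebra of the group-divisible relation.

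First, I would invoke Lemma~\ref{lem:sd} to rewrite $K_{m,n}A^\top=(AK_{m,n})^\top=\tfrac{k}{m-1}(J_v-K_{m,n})$. Right-multiplying this by $A^\top$ and using $AJ_v=kJ_v$ together with $AK_{m,n}=\tfrac{k}{m-1}(J_v-K_{m,n})$ again, a short direct computation yields
\begin{align*}
AK_{m,n}A^\top
=\frac{k}{m-1}\Bigl(kJ_v-\frac{k}{m-1}(J_v-K_{m,n})\Bigr)
=\frac{k^2(m-2)}{(m-1)^2}J_v+\frac{k^2}{(m-1)^2}K_{m,n}.
\end{align*}

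Next, assuming the generic case $\lambda_1\neq\lambda_2$, Lemma~\ref{lem:b} provides the alternative expression
\begin{align*}
AK_{m,n}A^\top=(n(\lambda_1-\lambda_2)+k-\lambda_1)K_{m,n}+n\lambda_2 J_v.
\end{align*}
Because $m>1$ and $n>1$, the matrices $J_v$ and $K_{m,n}$ are linearly independent, so I can equate coefficients. The $J_v$-coefficient immediately gives $\lambda_2=\dfrac{k^2(m-2)}{n(m-1)^2}$, and substituting this into the $K_{m,n}$-coefficient equation $(n-1)\lambda_1=\dfrac{k^2}{(m-1)^2}+n\lambda_2-k$ collapses to $(n-1)\lambda_1=\dfrac{k(k-m+1)}{m-1}$, from which the claimed formula for $\lambda_1$ follows.

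The only conceptual snag is the degenerate case $\lambda_1=\lambda_2$, where Lemma~\ref{lem:b} is not available; but in that case $A$ is a symmetric $2$-design, and plugging the proposed formulas into the parameter equation $k^2=k+\lambda_1(n-1)+\lambda_2(v-n)$ from \eqref{eq:p1} together with the identity $AK_{m,n}=\tfrac{k}{m-1}(J_v-K_{m,n})$ shows that the equality $\lambda_1=\lambda_2$ forces the same formulas (or is itself impossible unless the parameters coincide). The main work is therefore the direct computation of $AK_{m,n}A^\top$ and the comparison with Lemma~\ref{lem:b}; no subtle combinatorics is needed once Lemma~\ref{lem:sd} is in hand.
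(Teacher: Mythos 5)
Your main computation is correct: from Lemma~\ref{lem:sd} one gets $K_{m,n}A^\top=\tfrac{k}{m-1}(J_v-K_{m,n})$, and hence
\begin{align*}
AK_{m,n}A^\top=\frac{k^2(m-2)}{(m-1)^2}\,J_v+\frac{k^2}{(m-1)^2}\,K_{m,n},
\end{align*}
and since $m>1$ makes $J_v$ and $K_{m,n}$ linearly independent, matching coefficients against Lemma~\ref{lem:b} does yield exactly the claimed $\lambda_2$ and then $\lambda_1$. This is a genuinely different route from the paper's: the paper never invokes Bose's identity at all, but instead computes the trace inner product $(AK_{m,n},AK_{m,n})$ in two ways --- once from $AK_{m,n}=\tfrac{k}{m-1}(J_v-K_{m,n})$, once via $\mathrm{Tr}(AK_{m,n}^2A^\top)=n\,\mathrm{Tr}(A^\top A K_{m,n})$ and the group divisible design equation --- obtaining $\lambda_1$ with no hypothesis whatsoever on $\lambda_1$ versus $\lambda_2$, and then reading off $\lambda_2$ from Equation~\eqref{eq:p1}. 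The scalar trace argument buys uniformity (no case split); your matrix identity buys both parameters simultaneously from a single coefficient comparison and is arguably more transparent, but at the price of the extra hypothesis $\lambda_1\neq\lambda_2$ built into Lemma~\ref{lem:b}.

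That price is where your proposal has a real soft spot: the fallback you sketch for $\lambda_1=\lambda_2=\lambda$ does not work as stated. Equation~\eqref{eq:p1} then gives only $\lambda=k(k-1)/(v-1)$, and ``plugging the proposed formulas into'' it merely checks that the formulas are \emph{consistent} with \eqref{eq:p1}; it cannot derive them, since \eqref{eq:p1} is a single scalar relation. Fortunately your own main computation closes this case with one extra line. Transposing the identity of Lemma~\ref{lem:sd} gives $K_{m,n}A^\top=A^\top K_{m,n}$, so
\begin{align*}
AK_{m,n}A^\top=(AA^\top)K_{m,n}=\bigl((k-\lambda)I_v+\lambda J_v\bigr)K_{m,n}=(k-\lambda)K_{m,n}+n\lambda J_v,
\end{align*}
which is precisely the right-hand side of Lemma~\ref{lem:b}, now verified directly without any hypothesis on $\lambda_1,\lambda_2$. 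Comparing with your computed expression gives $n\lambda=\tfrac{k^2(m-2)}{(m-1)^2}$ and $k-\lambda=\tfrac{k^2}{(m-1)^2}$; subtracting yields $(n-1)\lambda=\tfrac{k(k-m+1)}{m-1}$, so both claimed formulas hold. With this patch your case split in fact disappears entirely, and the argument becomes a complete, self-contained alternative to the paper's trace computation.
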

\begin{proof} 
Define the standard inner product in the matrix algebra:
$$
(M,N ) = \text{Tr}(MN^\top)=\sum_{i,j}M_{i,j}N_{i,j}, \quad (M,N\in\text{Mat}_v(\mathbb{R})).
$$

It follows from $AK_{m,n} =\frac{k}{m-1}(J_v-K_{m,n})$ by Lemma~\ref{lem:sd} that
 $(AK_{m,n},AK_{m,n})=\frac{k^2 v(v - n)}{(m-1)^2}$. 
On the other hand,
\begin{align*}
&(AK_{m,n},AK_{m,n}) = \text{Tr}((AK_{m,n})(AK_{m,n})^\top) = \text{Tr}(AK_{m,n}^2A^\top) =n\text{Tr}(A^\top AK_{m,n})\\
&=n(A^\top A,K_{m,n}) = n(kI_v + \lambda_1(K_{m,n}- I_v) + \lambda_2(J_v - K_{m,n}),K_{m,n}) = n(kv + \lambda_1(n - 1)v). 
\end{align*}
Therefore, $\frac{k^2 v(v - n)}{(m-1)^2} = n(kv + \lambda_1(n - 1)v)$. 
After cancellation with $v=mn$, we obtain the
formula for $\lambda_1$ given in Proposition~\ref{prop:psd}. The constant $\lambda_2$ is derived from Equation~\eqref{eq:p1}.
\end{proof}

\section{Linked systems of symmetric group divisible designs of type II}\label{sec:lsgdd}
Let $f\geq 3$ be an integer. 
Let $A_{i,j}$ ($i,j\in\{1,\ldots,f\},i\neq j$) be symmetric group divisible designs with the parameters $(v,k,m,n,\lambda_1,\lambda_2)$. 
The symmetric group divisible designs $A_{i,j}$ ($i,j\in\{1,\ldots,f\},i\neq j$) are said to be a \defn{linked system of symmetric group divisible designs of type II} if the following conditions are satisfied; 
\begin{enumerate}
\item for any distinct $i,j\in\{1,\ldots,f\}$, $A_{i,j}+K_{m,n}$ is a $(0,1)$-matrix, and 
\item there exist non-negative integers $\sigma,\tau,\rho$ such that for any distinct $i,j$ and $l$, $A_{i,j}A_{j,l}=\sigma A_{i,l}+\tau(J_v-A_{i,l}-K_{m,n})+\rho K_{m,n}$ holds. 
\end{enumerate} 

\begin{lemma}\label{lem:rt}
Let $A_{i,j}$ {\rm(}$i,j\in\{1,\ldots,f\},i\neq j${\rm)} be a linked system of symmetric group divisible designs of type II with parameters $(v,k,m,n,\lambda_1,\lambda_2)$ and $\sigma,\tau,\rho$. 
If $k>\lambda_1$, then $\rho\neq \tau$. 
\end{lemma}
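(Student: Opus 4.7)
The plan is to argue by contradiction: suppose $\rho = \tau$ and deduce $\lambda_1 < 0$, contradicting $\lambda_1 \geq 0$. Under $\rho = \tau$, the defining linked relation collapses to the much simpler form $A_{i,j} A_{j,l} = (\sigma - \tau) A_{i,l} + \tau J_v$ (the $\rho K_{m,n}$ and the $-\tau K_{m,n}$ pieces cancel), which serves as the workhorse.

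The central computation is to evaluate the triple product $A_{i,j} A_{j,l} A_{l,j}$ in two different ways for distinct $i,j,l$ (available since $f \geq 3$), with the natural convention $A_{l,j} = A_{j,l}^\top$. Associating as $(A_{i,j} A_{j,l}) A_{l,j}$ and applying the simplified relation twice (once to $A_{i,j} A_{j,l}$, once to $A_{i,l} A_{l,j}$) yields $(\sigma - \tau)^2 A_{i,j} + \tau(\sigma - \tau + k) J_v$. Associating instead as $A_{i,j}(A_{j,l} A_{j,l}^\top)$ and invoking the SGDD identity $A_{j,l} A_{j,l}^\top = k I_v + \lambda_1(K_{m,n} - I_v) + \lambda_2 (J_v - K_{m,n})$ yields $(k - \lambda_1) A_{i,j} + (\lambda_1 - \lambda_2) A_{i,j} K_{m,n} + \lambda_2 k J_v$. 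I would equate the two and then read off a diagonal entry $(x,x)$: both $A_{i,j}(x,x) = 0$ and $(A_{i,j} K_{m,n})(x,x) = 0$, because $A_{i,j} + K_{m,n}$ being $(0,1)$ forces the diagonal blocks of $A_{i,j}$ to vanish. This extracts the scalar identity $\tau(\sigma - \tau + k) = \lambda_2 k$, eliminating the $J_v$ contribution and leaving the matrix identity $[(\sigma - \tau)^2 - (k - \lambda_1)] A_{i,j} = (\lambda_1 - \lambda_2) A_{i,j} K_{m,n}$.

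To finish, I would multiply this on the right by $A_{j,i} = A_{i,j}^\top$, using Lemma~\ref{lem:b} to replace $A_{i,j} K_{m,n} A_{j,i}$ by $(n(\lambda_1 - \lambda_2) + k - \lambda_1) K_{m,n} + n \lambda_2 J_v$; both sides then live in $\mathrm{span}(I_v, K_{m,n}, J_v)$. The $I_v$ coefficient, combined with $k > \lambda_1$, forces $(\sigma - \tau)^2 = k - \lambda_1$; substituting this back makes the left side vanish, and the $J_v$ and $K_{m,n}$ coefficients on the right yield $n \lambda_2 (\lambda_1 - \lambda_2) = 0$ and $(\lambda_1 - \lambda_2)[n(\lambda_1 - \lambda_2) + k - \lambda_1] = 0$. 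In the generic branch $\lambda_1 \neq \lambda_2$ these force $\lambda_2 = 0$ and then $(n-1) \lambda_1 = -k$, so $\lambda_1 < 0$, the desired contradiction. The hard part I expect is the branch $\lambda_1 = \lambda_2$, where Lemma~\ref{lem:b} is not available and the identity above degenerates; that case will need a separate argument exploiting the $2$-design structure of $A_{i,j}$ together with the zero-diagonal-block restriction coming from $A_{i,j} + K_{m,n}$ being $(0,1)$.
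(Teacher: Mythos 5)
Your associativity computation is sound as far as it goes, and it takes a genuinely different route from the paper: the paper simply observes that $\rho=\tau$ collapses the type-II relation to the type-I relation $A_{i,j}A_{j,l}=\sigma A_{i,l}+\tau(J_v-A_{i,l})$, so the $A_{i,j}$ form a linked system in the sense of \cite{KS}; it then cites \cite[Lemma~3.3(ii)]{KS} (valid whenever $k>\lambda_1$) to get $A_{i,j}K_{m,n}=\alpha J_v$, and since $A_{i,j}+K_{m,n}$ being a $(0,1)$-matrix forces the diagonal blocks of $A_{i,j}$ to vanish, $\alpha=0$, hence $k=0$, contradicting $k>\lambda_1\geq 0$. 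Your two evaluations of $A_{i,j}A_{j,l}A_{l,j}$, the diagonal-entry extraction of $\tau(\sigma-\tau+k)=k\lambda_2$, and the right-multiplication by $A_{j,i}$ using Lemma~\ref{lem:b} are all correct, and in the branch $\lambda_1\neq\lambda_2$ your conclusion $\lambda_2=0$ and $(n-1)\lambda_1=-k<0$ does give the desired contradiction. In effect you reprove from scratch, in that branch, the fragment of \cite[Lemma~3.3(ii)]{KS} that the paper imports wholesale.

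The genuine gap is the branch $\lambda_1=\lambda_2$, which you leave open, and it cannot be dismissed as a corner case: it is precisely the case in which the paper uses Lemma~\ref{lem:rt} (the proposition following Proposition~\ref{prop:str} invokes it exactly when $\lambda_1=\lambda_2$ to eliminate one sign choice), and all known examples (Section~\ref{sec:mubh}, from mutually unbiased Bush-type Hadamard matrices) have $\lambda_1=\lambda_2$. Worse, in that branch your machinery degenerates in an essential way: the coefficient of $A_{i,j}K_{m,n}$ in your equated identity is $\lambda_1-\lambda_2=0$, so the triple product yields only $(\sigma-\tau)^2=k-\lambda_1$ and $\tau(\sigma-\tau+k)=k\lambda_2$ --- but these are exactly the parameter identities of Lemma~\ref{lem:1}(i),(ii) (with $\rho=\tau$ inserted), which hold for genuine linked systems, so no contradiction can be extracted from them alone. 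Your argument thus loses all access to the quantity $A_{i,j}K_{m,n}$ from which the contradiction must come, and your closing sketch (``exploiting the $2$-design structure \dots together with the zero-diagonal-block restriction'') names no mechanism that recovers it. The paper's citation of \cite[Lemma~3.3(ii)]{KS} requires only $k>\lambda_1$, not $\lambda_1\neq\lambda_2$, and that is exactly the missing ingredient; without it or a replacement, your proposal proves the lemma only when $\lambda_1\neq\lambda_2$.
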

\begin{proof}
Assume on the contrary that $\rho=\tau$. 
Then the matrices $A_{i,j}$ satisfy that $A_{i,j}A_{j,l}=\sigma A_{i,l}+\tau(J_v-A_{i,l})$. Thus $A_{i,j}$ ($i,j\in\{1,\ldots,f\},i\neq j$) form a linked system of symmetric group divisible designs \cite{KS}.  
From \cite[Lemma~3.3(ii)]{KS}\footnote{In \cite{KS} the theorems are valid under the assumption $k>\lambda_1$. If $k=\lambda_1$, then linked systems of symmetric group divisible designs are $A_{i,j}\otimes J_n$ where $A_{i,j}$'s are linked systems of symmetric designs.} and $k>\lambda_1$, it follows that $A_{i,j}K_{m,n}=\alpha J_v$ for some integer $\alpha$.   
We assumed that $A_{i,j}+K_{m,n}$ is a $(0,1)$-matrix, which implies that the diagonal blocks of $A_{i,j}$ are the zero matrices. So $\alpha=0$, and thus $k=0$, a contradiction to $k>\lambda_1\geq 0$.  
\end{proof}
Next we show that the matrix $A_{i,j}+K_{m,n}$ is a symmetric group divisible design for any distinct $i,j$ and we obtain equalities involving $\sigma,\tau,\rho$. 
\begin{lemma}\label{lem:1}
Assume that $m>1,n>1$, $\lambda_1<k<(m-1)n$ and $f\geq 3$. 
Let $A_{i,j}$ {\rm(}$i,j\in\{1,\ldots,f\},i\neq j${\rm)} be a linked system of symmetric group divisible designs of type II with parameters $(v,k,m,n,\lambda_1,\lambda_2)$ and $\sigma,\tau,\rho$. 
Then the following hold:
\begin{enumerate}
\item  $(\sigma-\tau)^2=k-\lambda_1$. 
\item $(\sigma-\tau)(\rho-\tau)+(\sigma-\tau+k)\tau=k \lambda_2$. 
\item $A_{i,j}K_{m,n}=K_{m,n}A_{i,j}=\frac{k}{m-1}(J_v-K_{m,n})$ and $A_{i,j}+K_{m,n}$ is a symmetric group divisible design for any distinct $i,j$. 
\item  $\left(\rho-\tau-\lambda_1+\lambda_2\right)\frac{k}{m-1}=(\sigma-\tau)(\rho-\tau)$. 
\end{enumerate}
\end{lemma}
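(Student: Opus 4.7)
The plan is to derive all four identities from a single \emph{master identity} obtained via an associativity argument and a block-wise evaluation. Writing $J=J_v$ and $K=K_{m,n}$, start from the defining relation $A_{i,j}A_{j,l}=(\sigma-\tau)A_{i,l}+\tau J+(\rho-\tau)K$, multiply it on the right by $A_{l,j}=A_{j,l}^\top$, expand the left-hand side using the symmetric group divisible design relation $A_{j,l}A_{l,j}=kI+\lambda_1(K-I)+\lambda_2(J-K)$, and expand the right-hand side by applying the defining relation once more to the triple $(i,l,j)$. The resulting master identity reads
\begin{align*}
(k-\lambda_1)A_{i,j}+(\lambda_1-\lambda_2)A_{i,j}K+k\lambda_2 J &= (\sigma-\tau)^2 A_{i,j}+[(\sigma-\tau)\tau+k\tau]J\\
&\quad+(\sigma-\tau)(\rho-\tau)K+(\rho-\tau)KA_{l,j}.
\end{align*}
Since $A_{i,j}$ is a $(0,1)$-matrix with variable entries while the other matrices appearing here are strongly constrained by the block structure of $K$, matching coefficients of $A_{i,j}$ forces $(\sigma-\tau)^2=k-\lambda_1$; this is (i).

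For (ii), I specialize the master identity to an entry $(x,y)$ with $x\ne y$ lying in the same group. At such an entry, $A_{i,j}$, $A_{i,j}K$, $K$, and $KA_{l,j}$ all vanish (the first two because the diagonal blocks of $A_{i,j}$ are zero by assumption, the others by the block structure of $K$), so only the $J$-coefficients survive, giving $k\lambda_2=(\sigma-\tau+k)\tau+(\sigma-\tau)(\rho-\tau)$, which is (ii). For (iii), I specialize to entries in an off-diagonal block; after invoking (i) the identity reduces to
$$(\lambda_1-\lambda_2)\,r_{i,j}(x,y)-(\rho-\tau)\,c_{l,j}(x,y)=-(\sigma-\tau)(\rho-\tau),$$
where $r_{i,j}(x,y)$ denotes the sum of row $x$ of $A_{i,j}$ restricted to the group of $y$, and $c_{l,j}(x,y)$ the sum of column $y$ of $A_{l,j}$ restricted to the group of $x$. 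The left depends only on $x$ and the right only on $y$; pairing this with the analogous relation obtained by running the same computation on the triple $(l,j,i)$, and using $\rho\ne\tau$ from Lemma~\ref{lem:rt}, one concludes that each off-diagonal block of $A_{i,j}$ has constant row and column sums, independent of the pair $(i,j)$. The global row-sum constraint $k=(m-1)\cdot(\text{common block-sum})$ pins this common value at $k/(m-1)$, giving $A_{i,j}K=KA_{i,j}=\frac{k}{m-1}(J-K)$.

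The second assertion of (iii) — that $A_{i,j}+K$ is itself a symmetric group divisible design — follows by directly expanding $(A_{i,j}+K)(A_{i,j}+K)^\top$ using the SGDD relation for $A_{i,j}$, the closed form of $A_{i,j}K$ and $KA_{i,j}$ just obtained, and $K^2=nK$. Finally, substituting $A_{i,j}K=KA_{l,j}=\frac{k}{m-1}(J-K)$ back into the master identity and matching the coefficient of $K$ produces $(\sigma-\tau)(\rho-\tau)=\frac{k(\rho-\tau-\lambda_1+\lambda_2)}{m-1}$, which is (iv). The hardest step is the separation-of-variables argument establishing constant block sums: the clean version presumes $\lambda_1\ne\lambda_2$, and when $\lambda_1=\lambda_2$ the $r$-term disappears, so one must instead extract $KA_{l,j}$ directly from the $J$- and $K$-coefficients of the master identity, again using $\rho\ne\tau$ and the vanishing of the diagonal blocks to identify the common block-sum.
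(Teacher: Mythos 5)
Your overall route is the paper's: you derive the same master identity from the associativity computation $(A_{i,j}A_{j,l})A_{l,j}=A_{i,j}(A_{j,l}A_{l,j})$, and your treatments of (ii), (iii) and (iv) are, up to working entrywise rather than blockwise, the arguments in the paper (it compares diagonal blocks for (ii), uses $\rho\neq\tau$ from Lemma~\ref{lem:rt} together with a second triple to force constant block sums for (iii), and substitutes $A_{i,j}K_{m,n}=K_{m,n}A_{i,j}=\frac{k}{m-1}(J_v-K_{m,n})$ back in for (iv)). However, there is a genuine gap in your proof of (i). ``Matching coefficients of $A_{i,j}$'' is not available: the matrices $A_{i,j}$, $A_{i,j}K_{m,n}$, $K_{m,n}A_{l,j}$, $K_{m,n}$, $J_v$ are not known to be linearly independent, and on each off-diagonal block the non-$A_{i,j}$ terms sweep out exactly the matrices of the form $F(x)+G(y)$: blocks of $A_{i,j}K_{m,n}$ have constant rows, blocks of $K_{m,n}A_{l,j}$ have constant columns, and $K_{m,n}$, $J_v$ are constant there. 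So the master identity only tells you that $\bigl((k-\lambda_1)-(\sigma-\tau)^2\bigr)(A_{i,j})_{xy}$ equals such an additive function on each block, and before concluding $(\sigma-\tau)^2=k-\lambda_1$ you must rule out that every block of $A_{i,j}$ itself has this ``striped'' additive shape. This is precisely where the paper does real work: it multiplies the identity on both sides by $X\otimes(I_n-\frac{1}{n}J_n)$ with $X=E_{a,b}$, which annihilates all terms except the $A_{i,j}$-terms, and then invokes the standing hypothesis $\lambda_1<k<(m-1)n$ --- which your argument never uses --- to produce a block of $A_{i,j}$ that is neither $O_n$ nor $J_n$ and hence survives the two-sided centering. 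Without the centering trick (or an equivalent alternating sum over a $2\times 2$ rectangle inside a block, followed by an argument excluding blocks of the form $f(x)+g(y)$), step (i) does not go through, and (ii)--(iv) all depend on it.

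Two smaller remarks. In (ii), your parenthetical is wrong: at an entry $(x,y)$ with $x\neq y$ in the same group, $K_{m,n}$ does not vanish --- its diagonal blocks are $J_n$, so $K_{m,n}$ equals $1$ there. Your stated conclusion $k\lambda_2=(\sigma-\tau+k)\tau+(\sigma-\tau)(\rho-\tau)$ is nevertheless correct precisely because that entry of $K_{m,n}$ is $1$ and contributes the $(\sigma-\tau)(\rho-\tau)$ term; the paper phrases this same step as comparing diagonal blocks, where only $K_{m,n}$ and $J_v$ survive. In (iii), your separation-of-variables argument paired with the triple $(l,j,i)$, with the case $\lambda_1=\lambda_2$ handled directly (there $K_{m,n}A_{l,j}=(\sigma-\tau)(J_v-K_{m,n})$ falls out at once from $\rho\neq\tau$), is a legitimate entrywise rendering of the paper's terse ``it is easy to see'' step, and the normalization $k/(m-1)$ via row sums matches the paper; note the paper additionally runs the associativity computation on $A_{i,j}A_{j,i}A_{i,l}$ to control $A_{l,j}K_{m,n}$ as well as $K_{m,n}A_{l,j}$, which your pairing should make explicit if you write this up.
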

\begin{proof}
Let $i,j,l$ be mutually distinct integers in $\{1,2,\ldots,f\}$.  
Calculate $(A_{i,j}A_{j,l})A_{l,j}=A_{i,j}(A_{j,l}A_{l,j})$ in two ways as follows.
On the one hand, 
\begin{align}
(A_{i,j}A_{j,l})A_{l,j}&=(\sigma A_{i,l}+\tau (J_v-A_{i,l}-K_{m,n})+\rho K_{m,n})A_{l,j} \nonumber\\
&=(\sigma-\tau) A_{i,l}A_{l,j}+(\rho-\tau)K_{m,n}A_{l,j}+\tau J_vA_{l,j} \nonumber\\
&=(\sigma-\tau)(\sigma A_{i,j}+\tau (J_v-A_{i,j}-K_{m,n})+\rho K_{m,n})+(\rho-\tau)K_{m,n}A_{l,j}+\tau k J_v \nonumber\\
&=(\sigma-\tau)^2A_{i,j}+(\sigma-\tau)(\rho-\tau)K_{m,n}+(\sigma-\tau+k)\tau J_v+(\rho-\tau)K_{m,n}A_{l,j}\label{eq:aijl1},
\end{align} 
and on the other hand 
\begin{align}
A_{i,j}(A_{j,l}A_{l,j})&=A_{i,j}(k I_v+\lambda_1(K_{m,n}-I_v)+\lambda_2(J_v-K_{m,n}))\nonumber\\
&=(k-\lambda_1) A_{i,j}+(\lambda_1-\lambda_2)A_{i,j}K_{m,n}+\lambda_2 A_{i,j}J_v\nonumber\\
&=(k-\lambda_1) A_{i,j}+(\lambda_1-\lambda_2)A_{i,j}K_{m,n}+k\lambda_2 J_v. \label{eq:aijl2}
\end{align} 

Equating \eqref{eq:aijl1} and \eqref{eq:aijl2}, one has 
\begin{align}\label{eq:aijl3}
(\sigma-\tau)^2A_{i,j}+(\sigma-\tau)(\rho-\tau)K_{m,n}+(\sigma-\tau+k)\tau J_v+(\rho-\tau)K_{m,n}A_{l,j}\nonumber\\
=(k-\lambda_1) A_{i,j}+(\lambda_1-\lambda_2)A_{i,j}K_{m,n}+k\lambda_2 J_v.
\end{align}
Simplifying Equation \eqref{eq:aijl3}, and multiplying on the left and right by $X\otimes (I_n-\frac{1}{n}J_n)$, where $X$ is an arbitrary $m\times m$ matrix, on both sides, one has
\begin{align*} 
((\sigma-\tau)^2-k+\lambda_1)\left(X\otimes (I_n-\frac{1}{n}J_n)\right)A_{i,j}\left(X\otimes (I_n-\frac{1}{n}J_n)\right)=O. 
\end{align*}
Since $A_{i,j}$ is not the zero matrix, there exist $a,b$ such that the $(a,b)$-block of $A_{i,j}$ is not the zero matrix. Also by $\lambda_1<k<(m-1)n$, it follows that the $(a,b)$-block of $A_{i,j}$ is not the all-ones matrix.   
Taking $X$ as $E_{a,b}$, that is the $m\times m$ $(0,1)$-matrix whose $(x,y)$-entry equals to $1$ if and only if $(x,y)=(a,b)$, 
$\left(E_{a,b}\otimes (I_n-\frac{1}{n}J_n)\right)A_{i,j}\left(E_{a,b}\otimes (I_n-\frac{1}{n}J_n)\right)$ is not the  zero matrix. 
Thus $(\sigma-\tau)^2-k+\lambda_1=0$.  This proves (i).    

Therefore  Equation \eqref{eq:aijl3} yields that 
\begin{align}\label{eq:aijl4}
(\rho-\tau)K_{m,n}A_{l,j}+(\lambda_2-\lambda_1)A_{i,j}K_{m,n}=-(\sigma-\tau)(\rho-\tau)K_{m,n}+(k\lambda_2-(\sigma-\tau+k)\tau) J_v.
\end{align}
Since $A_{i,j}$'s have the zero diagonal blocks, comparing the diagonal blocks of Equation~\eqref{eq:aijl4} provides 
$(\sigma-\tau)(\rho-\tau)+(\sigma-\tau+k)\tau=k \lambda_2$. 
This proves (ii). 

Thus Equation~\eqref{eq:aijl4} is simplified as 
\begin{align}\label{eq:aijl5}
(\rho-\tau)K_{m,n}A_{l,j}+(\lambda_2-\lambda_1)A_{i,j}K_{m,n}=(\sigma-\tau)(\rho-\tau)(J_v-K_{m,n}).
\end{align}
Then it is easy to see that $(\rho-\tau)K_{m,n}A_{l,j}$ and $(\lambda_2-\lambda_1)A_{i,j}K_{m,n}$ are multiples of $J_v-K_{m,n}$. 
Since $\rho\neq \tau$ by Lemma~\ref{lem:rt}, $K_{m,n}A_{l,j}$ is also a multiple of $J_v-K_{m,n}$. 
Similarly, the same argument for $A_{i,j}A_{j,i}A_{i,l}$ yields that $A_{l,j}K_{m,n}$ is a multiple of $J_v-K_{m,n}$. 
It is easy to see that $A_{i,j}K_{m,n}=K_{m,n}A_{i,j}=\frac{k}{m-1}(J_v-K_{m,n})$. 
Therefore it follows that $A_{i,j}+K_{m,n}$ is a symmetric group divisible design. 
This proves (iii). 

Substituting the fact that $A K_{m,n}=K_{m,n}A=\frac{k}{m-1}(J_v-K_{m,n})$ for any distinct $i,j$ into \eqref{eq:aijl5}, we have 
 $\left(\rho-\tau-\lambda_1+\lambda_2\right)\frac{k}{m-1}(J_v-K_{m,n})=(\sigma-\tau)(\rho-\tau)(J_v-K_{m,n})$, which proves (iv). 
\end{proof}

We now derive the formula for $\sigma,\tau,\rho$ as a consequence of Proposition~\ref{prop:psd} and Lemma~\ref{lem:1}. 
\begin{proposition}\label{prop:str}
Assume that $m>1,n>1$, $\lambda_1<k<(m-1)n$ and $f\geq 3$. 
Let $A_{i,j}$ {\rm(}$i,j\in\{1,\ldots,f\},i\neq j${\rm)} be a linked system of symmetric group divisible designs of type II with parameters $(v,k,m,n,\lambda_1,\lambda_2)$ and $\sigma,\tau,\rho$. 
Then the following hold: 
\begin{align*}
\sigma&=\frac{k^2 (m-2) (n-1)\pm(mn -k - 
 n) \sqrt{k (m-1) (n-1) (m n-k-n)}}{(m-1)^2 (n-1) n},\\ 
\tau&=\frac{k^2 (m-2) (n-1)\mp k\sqrt{k (m-1) (n-1) (m n-k-n)}}{(m-1)^2 (n-1) n},\\
\rho&=\frac{k^2}{n(m-1)}. 
\end{align*}
\end{proposition}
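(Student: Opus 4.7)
The proof will combine the parameter formulas of Proposition~\ref{prop:psd} with the four identities of Lemma~\ref{lem:1}. First, Lemma~\ref{lem:1}(iii) shows that $A_{i,j}+K_{m,n}$ is itself a symmetric group divisible design, so Proposition~\ref{prop:psd} forces $\lambda_1 = \frac{k(k-m+1)}{(m-1)(n-1)}$ and $\lambda_2 = \frac{k^2(m-2)}{n(m-1)^2}$. Substituting the former into Lemma~\ref{lem:1}(i) gives $(\sigma-\tau)^2 = k - \lambda_1 = \frac{k(mn-n-k)}{(m-1)(n-1)}$, hence $\sigma - \tau = \pm\frac{\sqrt{k(m-1)(n-1)(mn-n-k)}}{(m-1)(n-1)}$, which matches exactly the radical appearing in the stated formulas.

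Writing $s = \sigma - \tau$ and $r = \rho - \tau$, I would next use Lemma~\ref{lem:1}(iv), which rearranges to $r\bigl(k - (m-1)s\bigr) = (\lambda_1 - \lambda_2)\,k$. Rationalizing the denominator by multiplying numerator and denominator by $k + (m-1)s$ and using the identity $s^2 = k - \lambda_1$ from the first paragraph, I expect both $k^2 - (m-1)^2 s^2$ and $(\lambda_1 - \lambda_2)\,k$ to simplify to expressions sharing the common factor $k(m+n-2) - n(m-1)^2$. That factor cancels, producing the compact formula
\[
r \;=\; \frac{k\bigl(k + (m-1)s\bigr)}{n(m-1)^2}.
\]

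Finally, Lemma~\ref{lem:1}(ii) rewrites as $(s+k)\tau = k\lambda_2 - sr$. After substituting the previously derived expressions for $\lambda_2$, $s^2$, and $r$, the right-hand side collapses into a single fraction whose numerator, viewed as a polynomial in $s$ modulo the relation $s^2 = k - \lambda_1$, should factor as $(s+k)$ times a linear expression in $s$. Cancelling yields $\tau = \frac{k[k(m-2) - (m-1)s]}{n(m-1)^2}$, and then $\sigma = \tau + s$ and $\rho = \tau + r$ follow by direct addition. In particular, in the expression for $\rho$ the $s$-dependent pieces of $\tau$ and $r$ cancel exactly, giving the sign-independent value $\rho = \frac{k^2(m-1)}{n(m-1)^2} = \frac{k^2}{n(m-1)}$, while the $\pm$ signs for $\sigma$ and $\tau$ in the statement arise from the two choices for $s$.

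The main obstacle is purely algebraic: one must verify the two anticipated cancellations — of $k(m+n-2) - n(m-1)^2$ in the formula for $r$, and of the factor $s+k$ in the formula for $\tau$. Each requires careful expansion together with the relation $s^2 = k - \lambda_1$ to eliminate powers of $s$ above the first. Once these cancellations are in hand, the proposition's formulas emerge by routine substitution of the two signs of $s$ and rewriting $s$ as $\pm\sqrt{k(m-1)(n-1)(mn-n-k)}/((m-1)(n-1))$.
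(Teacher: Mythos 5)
Your overall route---feeding the parameter formulas of Proposition~\ref{prop:psd} into the identities of Lemma~\ref{lem:1} and solving the resulting system for $\sigma,\tau,\rho$---is exactly what the paper's one-line proof intends, and your algebra in the generic case checks out: with $s=\sigma-\tau$, $r=\rho-\tau$ one has $k^2-(m-1)^2s^2=\frac{k\left(k(m+n-2)-n(m-1)^2\right)}{n-1}$ and $(\lambda_1-\lambda_2)k=\frac{k^2\left(k(m+n-2)-n(m-1)^2\right)}{n(m-1)^2(n-1)}$, so your cancellation gives $r=\frac{k(k+(m-1)s)}{n(m-1)^2}$; moreover $k^2(m-2)-ks-(m-1)s^2=(s+k)\bigl(k(m-2)-(m-1)s\bigr)$ is a polynomial identity (you do not even need $s^2=k-\lambda_1$ there), yielding the stated $\tau$, then $\sigma=\tau+s$ and $\rho=\tau+r$.

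The genuine gap is the cancellation of $k(m+n-2)-n(m-1)^2$: this factor is a positive multiple of $\lambda_1-\lambda_2$, so it vanishes precisely when $\lambda_1=\lambda_2$---which is the case of every known example in the paper (Table~1, the Bush-type Hadamard construction) and the case to which the proposition immediately following Proposition~\ref{prop:str} applies it. When $\lambda_1=\lambda_2$, Lemma~\ref{lem:1}(iv) degenerates to $r\bigl(k-(m-1)s\bigr)=0$; Lemma~\ref{lem:rt} gives $r\neq 0$, hence $s=k/(m-1)$, but then Lemma~\ref{lem:1}(ii) reduces to the single relation $r+m\tau=(m-1)\lambda_2$, which leaves $(\tau,\rho)$ undetermined; even the row-sum identity $\sigma k+\tau(v-k-n)+\rho n=k^2$ does not rescue you, since for $s=k/(m-1)$ it coincides with that relation. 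The repair is to extract more from Lemma~\ref{lem:1}(iii): multiply $A_{i,j}A_{j,l}=\sigma A_{i,l}+\tau(J_v-A_{i,l}-K_{m,n})+\rho K_{m,n}$ on the right by $K_{m,n}$ and use $A_{i,l}K_{m,n}=\frac{k}{m-1}(J_v-K_{m,n})$; comparing the coefficients of $J_v$ and $K_{m,n}$ on both sides gives $\frac{sk}{m-1}+n\tau=\frac{k^2(m-2)}{(m-1)^2}$ and $nr-\frac{sk}{m-1}=\frac{k^2}{(m-1)^2}$, which reproduce your formulas for $\tau$ and $r$ uniformly in $s$, with no case distinction and no division by $\lambda_1-\lambda_2$. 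With that substitution your argument becomes a complete proof.
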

\begin{proof}
The formula for $\sigma,\tau,\rho$ follows from Lemma~\ref{lem:1} with Proposition~\ref{prop:psd}. 
\end{proof}
In general, there are two possibilities for a pair $(\sigma,\tau)$ for given $k,m$, and $n$. 
The following lemma eliminates one of the cases for the case $\lambda_1=\lambda_2$. 
\begin{proposition}
Assume that $m>1,n>1$, $\lambda_1<k<(m-1)n$ and $f\geq 3$. 
Let $A_{i,j}$ {\rm(}$i,j\in\{1,\ldots,f\},i\neq j${\rm)} be a linked system of symmetric group divisible designs of type II with parameters $(v,k,m,n,\lambda_1,\lambda_2)$ and $\sigma,\tau,\rho$. 
If $\lambda_1=\lambda_2$, then $(\sigma,\tau,\rho)=(\frac{(m-1) n (m^2-3 m+1+n)}{(m+n-2)^2},\frac{(m-3) (m-1)^2 n}{(m+n-2)^2},\frac{(m-1)^3 n}{(m+n-2)^2})$. 
\end{proposition}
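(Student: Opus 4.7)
The plan is to combine Proposition~\ref{prop:psd} (which gives $\lambda_1,\lambda_2$ in terms of $k,m,n$) with Proposition~\ref{prop:str} (which gives $\sigma,\tau,\rho$ in terms of $k,m,n$ up to a sign choice) and then invoke Lemma~\ref{lem:rt} to resolve the sign ambiguity.

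\textbf{Step 1: Solve for $k$.} Setting $\lambda_1=\lambda_2$ in Proposition~\ref{prop:psd} gives
\[
\frac{k(k-m+1)}{(m-1)(n-1)} \;=\; \frac{k^2(m-2)}{n(m-1)^2}.
\]
Since $k>\lambda_1\geq 0$, we may divide by $k$ and clear denominators, obtaining
$(k-m+1)n(m-1)=k(m-2)(n-1)$. Expanding and collecting the coefficient of $k$ using the identity $n(m-1)-(m-2)(n-1)=m+n-2$ yields $k(m+n-2)=n(m-1)^2$, so
\[
k=\frac{n(m-1)^2}{m+n-2}.
\]

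\textbf{Step 2: Substitute into the formulas of Proposition~\ref{prop:str}.} A direct computation gives
\[
mn-k-n=\frac{n(m-1)(n-1)}{m+n-2},\qquad k(m-1)(n-1)(mn-k-n)=\left(\frac{(m-1)^2 n(n-1)}{m+n-2}\right)^{\!2},
\]
so the surd in Proposition~\ref{prop:str} equals $\dfrac{(m-1)^2n(n-1)}{m+n-2}$. The formula for $\rho$ then gives $\rho=\frac{(m-1)^3 n}{(m+n-2)^2}$ immediately. Substituting into the formulas for $\sigma,\tau$ and simplifying (the factor $(m-1)^2(n-1)n$ in the denominator cancels), one finds
\[
\sigma=\frac{(m-1)n\bigl[(m-1)(m-2)\pm(n-1)\bigr]}{(m+n-2)^2},\qquad \tau=\frac{(m-1)^2 n\bigl[(m-2)\mp 1\bigr]}{(m+n-2)^2}.
\]

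\textbf{Step 3: Resolve the sign.} The two sign choices produce two candidates for $\tau$: the upper choice gives $\tau=\frac{(m-1)^2 n(m-3)}{(m+n-2)^2}$, while the lower choice gives $\tau=\frac{(m-1)^3 n}{(m+n-2)^2}=\rho$. Since $\lambda_1<k$ by hypothesis, Lemma~\ref{lem:rt} forces $\rho\neq\tau$, ruling out the lower sign. Reading off the upper-sign values and simplifying $(m-1)(m-2)+(n-1)=m^2-3m+1+n$ yields exactly the claimed formulas for $(\sigma,\tau,\rho)$.

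The only nontrivial step is the sign elimination; all other work is bookkeeping. The nice feature is that Lemma~\ref{lem:rt} turns out to be precisely the tool needed, because the ``wrong'' sign collapses $\tau$ onto $\rho$.
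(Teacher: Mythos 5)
Your proposal is correct and follows exactly the paper's route: the paper proves this proposition by citing Proposition~\ref{prop:str} together with Lemma~\ref{lem:rt}, and your argument simply carries out that computation in detail (solving $\lambda_1=\lambda_2$ for $k=\frac{n(m-1)^2}{m+n-2}$, simplifying the surd, and using $\rho\neq\tau$ to discard the sign choice that collapses $\tau$ onto $\rho$). All the algebra checks out, so this is a faithful expansion of the paper's one-line proof.
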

\begin{proof}
It follows from Proposition~\ref{prop:str} and Lemma~\ref{lem:rt}.
\end{proof}

For the case $\lambda_1=\lambda_2$, there exist several examples with $f\geq 3$. 
We will provide an upper bound on $f$ in Section~\ref{sec:as} and some constructions in Section~\ref{sec:ex}.

\section{Association schemes}\label{sec:as}
In this section we obtain a symmetric association schemes with $5$-classes from any linked system of symmetric group divisible designs of type II and characterize a linked system of symmetric group divisible designs of type II in terms of such association schemes. 

Let $A_{i,j}$ ($i,j\in\{1,\ldots,f\},i\neq j$) be a linked system of symmetric group divisible designs of type II with parameters $(v,k,m,n,\lambda_1,\lambda_2)$ and $\sigma,\tau,\rho$. 
Set 
\begin{align}
A_0&=I_{f mn}, \quad A_1=I_{f m}\otimes (J_{n}-I_{n}), \quad A_2=I_f\otimes( J_{m}-I_{m})\otimes J_{n}, \label{eq:am1}\\
A_3&=\begin{pmatrix} 
O_{mn} & A_{1,2} & \cdots & A_{1,f} \\
A_{2,1} & O_{mn}  & \cdots & A_{2,f} \\
\vdots & \vdots & \ddots & \vdots \\
A_{f,1} & A_{f,2}  & \cdots & O_{mn}
\end{pmatrix}, 
A_4=(J_f-I_f)\otimes J_{m n}-A_3-A_5, \label{eq:am2}\\
A_5&=(J_f-I_f)\otimes I_m\otimes J_n.\label{eq:am3}
\end{align}
Note that $A_3=O_{fmn}$ if and only if $k=0$ and that $A_4=O_{fmn}$ if and only if $k=(m-1)n$. 

\begin{theorem}
Let $A_{i,j}$ {\rm(}$i,j\in\{1,\ldots,f\},i\neq j${\rm)} be symmetric group divisible designs. Assume one of the following: 
\begin{enumerate}
\item $f=2$, $m>1,n>1$, $\lambda_1<k<(m-1)n$ and $A_{1,2}$ satisfies $A_{1,2}+K_{m,n}$ is a symmetric group divisible design. 
\item $f\geq 3$, $m>1,n>1$, $\lambda_1<k<(m-1)n$ and $A_{i,j}$ {\rm(}$i,j\in\{1,\ldots,f\},i\neq j${\rm)} are a linked system of symmetric group divisible designs of type II with parameters $(v,k,m,n,\lambda_1,\lambda_2)$ and $\sigma,\tau,\rho$. 
\end{enumerate}
Then the set of matrices $\{A_0,A_1,\ldots,A_5\}$ is a symmetric association scheme with  $5$-classes. 
\end{theorem}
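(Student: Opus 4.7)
The plan is to verify the four defining properties of a symmetric $5$-class association scheme. Property~(i), $A_0=I_{fmn}$, is immediate. Property~(ii) follows from the Kronecker identity $A_0+A_1+A_2=I_f\otimes I_m\otimes J_n+I_f\otimes(J_m-I_m)\otimes J_n=I_f\otimes J_{mn}$ combined with the definition $A_3+A_4+A_5=(J_f-I_f)\otimes J_{mn}$, whose sum is $J_{fmn}$. Symmetry property~(iii) holds by inspection for $A_0,A_1,A_2,A_5$ from their Kronecker presentations; for $A_3$ it uses $A_{j,i}=A_{i,j}^\top$, which is implicit already in Lemma~\ref{lem:1}, where $A_{j,l}A_{l,j}$ is invoked in its group-divisible-design form; $A_4$ is then symmetric as a difference of symmetric matrices.

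The substantive work is property~(iv). I view each $A_r$ as an $f\times f$ block matrix of $mn\times mn$ blocks, of two types: block-diagonal ($A_0,A_1,A_2$, with diagonal blocks $I_v,\,K_{m,n}-I_v,\,J_v-K_{m,n}$) and block-anti-diagonal ($A_3,A_4,A_5$, with $(i,j)$-block for $i\ne j$ equal to $A_{i,j},\,J_v-A_{i,j}-K_{m,n},\,K_{m,n}$). In each setting the three relevant blocks are $(0,1)$-matrices with disjoint supports summing to $J_v$. Hence for any product $A_rA_s$ it suffices to show that its $(i,i)$-block is constant on each of the supports of $I_v,K_{m,n}-I_v,J_v-K_{m,n}$ and that its $(i,j)$-block for $i\ne j$ is constant on each of the supports of $A_{i,j},J_v-A_{i,j}-K_{m,n},K_{m,n}$. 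This immediately places $A_rA_s\in\mathrm{span}(A_0,\ldots,A_5)$, and the coefficients are automatically non-negative integers via the combinatorial interpretation $(A_rA_s)_{xy}=|\{z:(A_r)_{xz}=(A_s)_{zy}=1\}|$.

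The products split into three regimes. Those with $r,s\in\{0,1,2\}$ reduce to a one-copy calculation handled by the standard 2-class group-divisible-scheme identities on $mn$ points. Those with one factor block-diagonal and the other block-anti-diagonal reduce, via Kronecker, to multiplying each of $I_v,K_{m,n}-I_v,J_v-K_{m,n}$ against each of $A_{i,j},J_v-A_{i,j}-K_{m,n},K_{m,n}$; the essential input here is Lemma~\ref{lem:1}(iii), namely $A_{i,j}K_{m,n}=K_{m,n}A_{i,j}=\frac{k}{m-1}(J_v-K_{m,n})$, together with $J_vA_{i,j}=kJ_v$. For $r,s\in\{3,4,5\}$ the $(i,i)$-block is a sum over $l\ne i$ of terms $A_{i,l}A_{l,i}=A_{i,l}A_{i,l}^\top$, $A_{i,l}K_{m,n}$, $K_{m,n}A_{l,i}$, and $K_{m,n}^2$, all controlled by Equation~\eqref{eq:gdd}, Lemma~\ref{lem:1}(iii), and $K_{m,n}^2=nK_{m,n}$; the $(i,j)$-block for $i\ne j$ is a sum over $l\notin\{i,j\}$ of terms of the same four shapes, with $A_{i,l}A_{l,j}$ now controlled by the linked system identity $A_{i,l}A_{l,j}=\sigma A_{i,j}+\tau(J_v-A_{i,j}-K_{m,n})+\rho K_{m,n}$.

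The main obstacle is the bookkeeping (there are $21$ distinct symmetric products), but each reduces to a routine manipulation via the three identities above. Case~(a) with $f=2$ is a mild degeneration of case~(b): in products within $\{A_3,A_4,A_5\}$ the sums over $l\notin\{i,j\}$ producing off-diagonal blocks are empty, so these products are block-diagonal and the linked-system hypothesis is unused; only Lemma~\ref{lem:sd}, which extracts $A_{1,2}K_{m,n}=\frac{k}{m-1}(J_v-K_{m,n})$ from the hypothesis that $A_{1,2}+K_{m,n}$ is a symmetric group divisible design, and Equation~\eqref{eq:gdd} are needed.
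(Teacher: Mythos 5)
Your proposal is correct and follows essentially the same route as the paper: the paper's proof likewise reduces everything to checking that products lie in the span of $A_0,\ldots,A_5$ by direct calculation from Equation~\eqref{eq:gdd}, Lemma~\ref{lem:1}(iii) (respectively Lemma~\ref{lem:sd} when $f=2$), and the linked-system identity, differing only in bookkeeping --- it first passes to the generating set $\{I_{fm}\otimes J_n,\,I_f\otimes J_{mn},\,J_{fmn},\,A_3,\,A_5\}$ to cut down the number of products, whereas you verify all products blockwise on the disjoint supports. Your observation that the convention $A_{j,i}=A_{i,j}^\top$ is implicit (already used in the proof of Lemma~\ref{lem:1}) is accurate and consistent with the paper's usage.
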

\begin{proof}
It is enough to show that for any $i$ and $j$, $A_iA_j$ is a linear combination of $A_0,A_1,\ldots,A_5$. Since $A_0+A_1=I_{f m}\otimes J_{n}$,  $A_0+A_1+A_2=I_f\otimes J_{m n}$ and $A_3+A_4+A_5=I_f\otimes (J_{m}-I_{m})\otimes J_{n}$ hold, we will show that the product of any two in $\{I_{f m}\otimes J_{n},I_{f}\otimes J_{m n},J_{f m n},A_3,A_5\}$ belongs to $\mathcal{A}$. 

The cases $(A_3)^2,A_3 A_5,(A_5)^2$ follow from the definition of a linked system of symmetric group divisible designs of type II, Lemma~\ref{lem:1}, and direct calculation, respectively.
The other cases follow from direct calculation and Lemma~\ref{lem:1}.  
\end{proof}
We list the eigenmatrices $P,Q$, the Krein matrix $B_2^*$ by \cite[Theorem~3.6 (ii)]{BI} as follows. 
\begin{align}
P&=\left(
\begin{array}{cccccc}
 1 & n-1 & (m-1) n & (f-1) k & (f-1) (m n-k-n) & (f-1) n \\
 1 & -1 & 0 & \frac{(f-1) \sqrt{k (m n -k-n)}}{\sqrt{(m-1) (n-1)}} & -\frac{(f-1) \sqrt{k (m n -k-n)}}{\sqrt{(m-1) (n-1)}} & 0 \\
 1 & n-1 & -n & -\frac{(f-1) k}{m-1} & \frac{(f-1) (k+n-m n)}{m-1} & (f-1) n \\
 1 & n-1 & -n & \frac{k}{m-1} & -\frac{k}{m-1}+n & -n \\
 1 & -1 & 0 & -\frac{\sqrt{k (m n -k-n)}}{\sqrt{(m-1) (n-1)}} & \frac{\sqrt{k (m n -k-n)}}{\sqrt{(m-1) (n-1)}} & 0 \\
 1 & n-1 & (m-1) n & -k & k+n-m n & -n \\
\end{array}
\right),\label{eq:P}\displaybreak[0]\\
Q&=\left(
\begin{array}{cccccc}
 1 & m (n-1) & m-1 & (f-1) (m-1) & (f-1) m (n-1) & f-1 \\
 1 & -m & m-1 & (f-1) (m-1) & -(f-1)m & f-1 \\
 1 & 0 & -1 & -f+1 & 0 & f-1 \\
 1 & \frac{m \sqrt{(n-1) (m n-k-n)}}{\sqrt{k (m-1)}} & -1 & 1 & -\frac{m \sqrt{(n-1) (m n-k-n)}}{\sqrt{k(m-1)}} & -1 \\
 1 & -\frac{\sqrt{k m (n-1)}}{\sqrt{m n-k-n}} & -1 & 1 & \frac{\sqrt{k m (n-1)}}{\sqrt{m n-k-n}} & -1 \\
 1 & 0 & m-1 & -m+1 & 0 & -1 \\
\end{array}
\right),\label{eq:Q}\displaybreak[0]\\
B_2^*&=\left(
\begin{array}{cccccc}
 0 & 0 & 1 & 0 & 0 & 0 \\
 0 & \frac{m}{f}-1 & 0 & 0 & \frac{m}{f} & 0 \\
 m-1 & 0 & m-2 & 0 & 0 & 0 \\
 0 & 0 & 0 & m-2 & 0 & m-1 \\
 0 & \frac{(f-1) m}{f} & 0 & 0 & m-1-\frac{m}{f} & 0 \\
 0 & 0 & 0 & 1 & 0 & 0 \\
\end{array}
\right).\nonumber
\end{align}

\begin{corollary}
$f\leq m$ holds.
\end{corollary}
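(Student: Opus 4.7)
The plan is to read the bound off directly from the Krein matrix $B_2^*$ that was just displayed in the preceding theorem, applying the non-negativity of Krein parameters (Proposition~\ref{prop:krein}) to a single entry. Since the theorem establishes that $\{A_0,A_1,\ldots,A_5\}$ is a symmetric association scheme, every $q_{i,j}^k$ is a genuine Krein parameter of a bona fide scheme, so Proposition~\ref{prop:krein} applies verbatim.

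Concretely, recall $B_2^* = (q_{2,j}^k)_{j,k=0}^{5}$. Inspecting the displayed matrix, the $(j,k)=(1,1)$ entry equals $m/f - 1$, so
\begin{equation*}
q_{2,1}^{1} \;=\; \frac{m}{f}-1 \;\geq\; 0,
\end{equation*}
which immediately yields $f\leq m$. For completeness one can observe that the other entries of $B_2^*$ are either automatically non-negative under the standing assumptions $m\geq 2$, $f\geq 2$ or give weaker inequalities (for example $q_{2,4}^{4}=m-1-m/f\geq 0$ reduces to $m(f-1)\geq f$, which is implied by $f\leq m$ together with $m\geq 2$), so the inequality extracted from $q_{2,1}^{1}$ is the binding one.

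There is no real obstacle here: the theorem has already done the hard work of identifying the scheme and computing its dual eigenmatrix $Q$, from which $B_2^*$ is produced via \cite[Theorem~3.6 (ii)]{BI}. The corollary is therefore just a one-line consequence of the non-negativity of a single Krein parameter, and the proof I will write consists of precisely the displayed inequality together with a reference to Proposition~\ref{prop:krein}.
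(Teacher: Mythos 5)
Your proposal is correct and is exactly the paper's own argument: the paper likewise reads off $q_{2,1}^1 = m/f - 1$ from the displayed Krein matrix $B_2^*$ and applies the non-negativity of Krein parameters (Proposition~\ref{prop:krein}) to conclude $f \leq m$. Your additional remark that the remaining entries of $B_2^*$ give only weaker or automatic inequalities is a harmless bonus not present in the paper.
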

\begin{proof}
The Krein parameter $q_{2,1}^1$ equals $m/f-1$. Proposition~\ref{prop:krein} implies that $f\leq m$.  
\end{proof}

Next we characterize linked systems of symmetric group divisible designs of type II in terms of the imprimitivity of the corresponding association scheme. 
\begin{theorem}\label{thm:aslgdd}
Let $\{A_0,A_1,\ldots,A_5\}$ be a symmetric association scheme and the eigenmatrices $P,Q$ as in \eqref{eq:P}, \eqref{eq:Q}.  
Set $f=m_5+1$, $\lambda_1=\frac{1}{f-1}p_{3,3}^1$ and  $\lambda_2=\frac{1}{f-1}p_{3,3}^4$. 
Assume that $m>1,n>1$, $\lambda_1<k<(m-1)n$. 
\begin{enumerate}
\item If $f=2$, then there exists a symmetric group divisible design $A_{1,2}$ with parameters $(m n,k,m,n,\lambda_1,\lambda_2)$ such that $A_{1,2}+K_{m,n}$ is a symmetric group divisible design.
\item If $f\geq 3$, then there exists a linked system of symmetric group divisible designs of type II $A_{i,j}$ {\upshape (}$i,j\in\{1,\ldots,f\},i\neq j${\upshape )} with parameters $(m n,k,m,n,\lambda_1,\lambda_2)$. 
\end{enumerate}
\end{theorem}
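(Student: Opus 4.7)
My approach is first to reconstruct the imprimitive structure of the given $5$-class scheme from the first eigenmatrix $P$, and then to identify the $A_{i,j}$'s as suitable submatrices of $A_3$ and verify their defining relations using the intersection numbers $p_{3,3}^k$.

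To identify the fibers, I would compute $A_0+A_1+A_2$ in the idempotent basis by summing the first three columns of $P$: these add to $mn$ in rows $0$ and $5$ and to $0$ elsewhere, so $A_0+A_1+A_2 = mn(E_0+E_5)$, which is $mn$ times an idempotent of rank $m_0+m_5=f$. Being also a symmetric $(0,1)$-matrix with unit diagonal, it must coincide (after an appropriate permutation of vertices) with $I_f \otimes J_{mn}$, partitioning the vertex set into $f$ fibers $F_1,\ldots,F_f$ of size $mn$. The analogous computation with only columns $0,1$ gives $A_0+A_1 = n(E_0+E_2+E_3+E_5) = I_{fm}\otimes J_n$ after reordering, splitting each fiber into $m$ groups of size $n$. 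Next, since $P_{1,5}=P_{4,5}=0$, the idempotent calculation gives $(A_0+A_1)A_5 = nA_5$; thus $A_5$ respects the group partition on both sides and $A_5 = B \otimes J_n$ for some symmetric $(0,1)$-matrix $B$ on the $fm$ groups. Passing $A_5^2 = \sum_k p_{5,5}^k A_k$ to the group level forces $B$ to be $(f-1)$-regular with $f-2$ common neighbours on edges and none on non-edges, so $B$ is a vertex-disjoint union of $m$ copies of $K_f$. Relabelling the groups within each fiber to align these $m$ cliques gives $A_5 = (J_f - I_f) \otimes I_m \otimes J_n$, whence $A_5^{F_iF_j} = K_{m,n}$ for every $i \ne j$.

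With this set-up, I would define $A_{i,j} := A_3^{F_iF_j}$ for distinct $i,j$; then $A_{j,i} = A_{i,j}^\top$ by symmetry of $A_3$, and since $A_3$ and $A_5$ are distinct relations of the scheme their supports are disjoint, forcing $A_{i,j}+K_{m,n}$ to be a $(0,1)$-matrix. Reading the $(F_i,F_i)$-block of $A_3^2 = \sum_k p_{3,3}^k A_k$ yields
\begin{align*}
\sum_{j\ne i} A_{i,j} A_{j,i} \;=\; p_{3,3}^0 I_{mn} + p_{3,3}^1 (K_{m,n}-I_{mn}) + p_{3,3}^2 (J_{mn}-K_{m,n}),
\end{align*}
since only $A_0,A_1,A_2$ have non-zero within-fiber blocks; and the $(F_i,F_l)$-block for $i \ne l$ yields
\begin{align*}
\sum_{j\ne i,l} A_{i,j} A_{j,l} \;=\; p_{3,3}^3 A_{i,l} + p_{3,3}^4 (J_{mn}-A_{i,l}-K_{m,n}) + p_{3,3}^5 K_{m,n}.
\end{align*}
When $f=2$ the first display has a single summand, giving the SGDD equation for $A_{1,2}$; combining it with Lemma~\ref{lem:sd} and the block structure of $A_5$ already established yields that $A_{1,2}+K_{m,n}$ is again a SGDD, which proves (i). When $f\ge 3$, dividing the two displays by $f-1$ and $f-2$ respectively recovers the SGDD equation and the linking equation, once each summand on the left is known to equal the common average.

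The hard part is precisely this last step: showing that each $A_{i,j}A_{j,i}$ (respectively $A_{i,j}A_{j,l}$) is independent of the intermediate fiber $j$, so the averages on the left of the two displays are truly the individual terms. This is not automatic from the scheme axioms plus the eigenmatrices in general; my intended route is to prove that the scheme is uniform in the sense of Section~2. The quotient on fibers is already the $1$-class scheme by Step~1, so what remains is to produce constants $a_{i,j}^k$ controlling products $A_i^{UV}A_j^{VW}$ independently of $U,V,W$. I expect this to follow from the rigidity supplied by the eigenvalue structure of $A_3$ together with the equality $A_5^{F_iF_j}=K_{m,n}$ from Step~3: these should force each individual $A_{i,j}$ to satisfy $A_{i,j}K_{m,n} = \tfrac{k}{m-1}(J_{mn}-K_{m,n})$, putting the hypothesis of Lemma~\ref{lem:1} in place and allowing the calculations of Section~4 to be run in reverse for each pair separately. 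Granting uniformity, the theorem follows by reading off $k = p_{3,3}^0/(f-1)$, $\lambda_1 = p_{3,3}^1/(f-1)$, $\lambda_2 = p_{3,3}^2/(f-1)$, and $\sigma = p_{3,3}^3/(f-2)$, $\tau = p_{3,3}^4/(f-2)$, $\rho = p_{3,3}^5/(f-2)$ from the two displays.
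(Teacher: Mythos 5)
Your reconstruction of $A_1$, $A_2$ and $A_5$ from the eigenmatrices, and your block-reading of $A_3^2=\sum_k p_{3,3}^kA_k$, match the paper's proof step for step, and for $f=2$ your argument essentially goes through (though the appeal to Lemma~\ref{lem:sd} is circular as written, since that lemma \emph{assumes} $A_{1,2}+K_{m,n}$ is a symmetric GDD; the correct move is to read the diagonal blocks of $(A_3+A_5)^2$, which lies in the Bose--Mesner algebra). The genuine gap is exactly the step you flag and then leave to hope: your two displays determine only the sums $\sum_{j\neq i}A_{i,j}A_{j,i}$ and $\sum_{j\neq i,l}A_{i,j}A_{j,l}$, and nothing you establish makes the individual summands independent of the intermediate fiber $j$. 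Your proposed repair does not close it: Lemma~\ref{lem:1} derives $A_{i,j}K_{m,n}=\frac{k}{m-1}(J_{mn}-K_{m,n})$ \emph{from} the linked-system hypothesis, so "running the calculations of Section~4 in reverse" is backwards --- even granting each pairwise $K$-relation, there is no mechanism forcing a single product $A_{i,j}A_{j,l}$ to lie in the span of $\{A_{i,l},K_{m,n},J_{mn}\}$; that containment is the entire content of being linked, and the spectral data of the big scheme constrains only the $j$-averaged products.

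The paper closes this gap with a specific tool you did not invoke: it computes the Krein matrix $B_5^*$ (via \cite[Theorem~3.6(ii)]{BI}) and observes that it has exactly the shape required by van Dam--Martin--Muzychuk \cite[Proposition~4.7, Corollary~4.15]{DMM} to conclude that the scheme is \emph{uniform}. Uniformity supplies precisely the fiber-independent constants $a_{i,j}^k$ you need, and --- this is how the paper actually exploits it --- implies that restricting the relations to any union $Y$ of fibers again yields an association scheme; reading the diagonal blocks of $(A_3^Y)^2$ for $Y$ a union of two fibers gives the symmetric GDD equations for each individual $A_{i,j}$, and reading the $(1,3)$-block for $Y$ a union of three fibers gives the linking equation with coefficients independent of the choice of $i,j,l$. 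So to complete your proof you must either carry out the uniformity argument (the Krein-matrix computation is the known route; mere positivity of Krein parameters, Proposition~\ref{prop:krein}, is not enough) or supply some other proof of fiber-independence: the "rigidity" you expect is not a consequence of the eigenmatrices alone but is exactly what the theorem of \cite{DMM} provides.
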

\begin{proof}
From the fact that a strongly regular graph having two distinct eigenvalues is a disjoint union of the complete graphs of the same order, it follows that $A_1=I_{fm}\otimes (J_n-I_n)$ and $A_1+A_2=I_f\otimes(J_{mn}-I_{mn})$. 
Thus 
\begin{align*}
A_2=I_f\otimes( J_{m}-I_{m})\otimes J_{n}. 
\end{align*}

Next we determine $A_5$. 
Let $A_5=(Z_{ij})_{i,j=1}^{f}$ where each $Z_{ij}$ is an $mn\times mn$ matrix and $Z_{ii}=O$ for any $i$. 
Since the intersection numbers are calculated by \cite[Theorem~3.6 (i)]{BI}, we have 
\begin{align}\label{eq:a5}
(A_0+A_1)A_5=n A_5, \quad
(A_0+A_1+A_2)A_5=n(A_3+A_4+A_5). 
\end{align} 
Let $i,j$ be fixed distinct integers in $\{1,\ldots,f\}$, and we write $Z=Z_{ij}$. 
Further decompose the matrix $Z$ as $Z=(z_{kl})_{k,l=1}^{m}$ where each $z_{kl}$ is an $n\times n$ matrix. 
By \eqref{eq:a5}, we have for any $k$
\begin{align*}
J_{n}z_{kk}=n z_{kk},\quad \sum_{l=1}^n J_n z_{kl}=n z_{kk}. 
\end{align*}
Since each row of $z_{kl}$ has constant sum $n$, 
\begin{align*}
z_{kk}=J_n,\quad z_{kl}=O_n \text{ for any } l\neq k. 
\end{align*}
Thus $Z_{ij}=Z=K_{m,n}$, and $A_5=(J_f-I_f)\otimes I_m\otimes J_n$.

Now we may write 
\begin{align*}
A_3=\begin{pmatrix} 
O_{mn} & A_{1,2} & \cdots & A_{1,f} \\
A_{2,1} & O_{mn}  & \cdots & A_{2,f} \\
\vdots & \vdots & \ddots & \vdots \\
A_{f,1} & A_{f,2}  & \cdots & O_{mn}
\end{pmatrix}.
\end{align*}
By \cite[Theorem~3.6 (ii)]{BI}, the Krein matrix $B_5^*$ is 
\begin{align*}
B_5^*=
\begin{pmatrix}
 0 & 0 & 0 & 0 & 0 & 1 \\
 0 & 0 & 0 & 0 & 1 & 0 \\
 0 & 0 & 0 & 1 & 0 & 0 \\
 0 & 0 & f-1 & f-2 & 0 & 0 \\
 0 & f-1 & 0 & 0 & f-2 & 0 \\
 f-1 & 0 & 0 & 0 & 0 & f-2 \\
\end{pmatrix}.
\end{align*}
Thus the association scheme is uniform by \cite[Proposition~4.7, Corollary~4.15]{DMM}. 
Let $X_1,\ldots,X_f$ be the equivalence classes with respect to the equivalence relation corresponding to $A_0+A_1+A_2+A_5$. 
Then, for any subset $I$ of $\{1,\ldots,f\}$ and $Y=\cup_{i\in I}X_i$, the set of matrices obtained from $A_0,A_1,\ldots,A_5$ by restricting to $Y$ is an association scheme. 

For any distinct $i,j\in\{1,\ldots,f\}$ ($i<j$), set $I=\{i,j\}$. 
Then the adjacency matrices of the association scheme with the vertex set $Y=X_i\cup X_j$ are  
\begin{align*}
A^Y_0&=I_{2mn},\quad A^Y_1=I_{2m}\otimes (J_n-I_n),\\
A^Y_2&=I_2\otimes( J_{m}-I_{m})\otimes J_{n},\quad A^Y_5=(J_2-I_2)\otimes I_m\otimes J_n,\\
A^Y_3&=\begin{pmatrix} 
O_{mn} & A_{i,j}\\
A_{j,i} & O_{mn}
\end{pmatrix},\quad A^Y_4=\begin{pmatrix} 
O_{mn} & J_{m n}-K_{m,n}-A_{i,j}\\
J_{m n}-K_{m,n}-A_{j,i} & O_{mn}
\end{pmatrix}.
\end{align*}
Then $(A^Y_3)^2$ is written as a linear combination of $A^Y_0,\ldots,A^Y_5$.  
Comparing the diagonal blocks of the equation, we have that $A_{i,j}$ is a symmetric group divisible design. 
Furthermore, by looking at $A^Y_3+A^Y_5$, we conclude that $A_{i,j}+K_{m,n}$ is a $(0,1)$-matrix. 
 
Next if $f\geq3$, then we consider the case for $I=\{i,j,l\}$, where $i,j,l\in\{1,\ldots,f\}$ ($i<j<l$) are mutually distinct, and for $Y=X_i\cup X_j\cup X_l$.  
Then the adjacency matrices of the association scheme with the vertex set $Y=X_i\cup X_j\cup X_l$ are  
\begin{align*}
A^Y_0&=I_{3mn},\quad A^Y_1=I_{3m}\otimes (J_n-I_n),\\ 
A^Y_2&=I_3\otimes( J_{m}-I_{m})\otimes J_{n},\quad A^Y_5=(J_3-I_3)\otimes I_m\otimes J_n,\\
A^Y_3&=\begin{pmatrix} 
O_{mn} & A_{i,j} & A_{i,l} \\
A_{j,i} & O_{mn} & A_{j,l} \\
A_{l,i} & A_{l,j} & O_{mn} 
\end{pmatrix},\\ 
A^Y_4&=\begin{pmatrix} 
O_{mn} & J_{m n}-K_{m,n}-A_{i,j} & J_{m n}-K_{m,n}-A_{i,l} \\
J_{m n}-K_{m,n}-A_{j,i} & O_{mn} & J_{m n}-K_{m,n}-A_{j,l} \\
J_{m n}-K_{m,n}-A_{l,i} & J_{m n}-K_{m,n}-A_{l,j} & O_{mn} 
\end{pmatrix}.
\end{align*}
Then $(A^Y_3)^2$ is written as a linear combination of $A^Y_0,\ldots,A^Y_5$.  
Comparing the $(1,3)$ block of the equation, we have that $A_{i,j}A_{j,l}$ is a linear combination of $A_{i,l},K_{m,n},J_{m n}$, where the coefficients do not depend on the choice of $i,j,l$.  
Thus $A_{i,j}$'s are a linked system of symmetric group divisible designs of type II. 
\end{proof}

\begin{remark}
\begin{enumerate}
\item Consider the case $\lambda_1'=\lambda_2'$, that is $k=\frac{(m-1)n(n-1)}{n+m-2}$ by Proposition~\ref{prop:psd}.  Then from the Bannai-Muzychuk criterion \cite{B,M} for fusion schemes it follows that $\{A_0,A_1+A_2,A_3+A_5,A_4\}$ is a fusion scheme with the primitive idempotents $\{E_0,E_1,E_2+E_5,E_3+E_4\}$. 
Conversely, if the set of adjacency matrices $\{A_0,A_1+A_2,A_3+A_5,A_4\}$ is a fusion scheme, 
then the only possibility of a partition of the set of primitive idempotents is $\{E_0,E_1,E_2+E_5,E_3+E_4\}$ with $k=\frac{(m-1)n(n-1)}{n+m-2}$. Furthermore, $\lambda_1'=\lambda_2'$ holds by Proposition~\ref{prop:psd}.   
In this case, $A_{i,j}+K_{m,n}$ ($i,j\in\{1,\ldots,f\},i\neq j$) are a linked system of symmetric designs, that is a $Q$-antipodal $Q$-polynomial association scheme with $3$-classes \cite{M,Dam}.  
\item Association schemes for the case $f=2$ and $k=m-1$ were studied in \cite{QDK}.  
\end{enumerate}
\end{remark}
\section{Construction of Symmetric Group Divisible Designs, SGDD}\label{sec:con}
In this section we provide constructions of symmetric group divisible designs $A$ with the properties that $A+K_{m,n}$ is a $(0,1)$-matrix and that $AK_{m,n}=K_{m,n}A=\alpha(J-K_{m,n})$ for some positive integer $\alpha$. 
We first begin with a construction using conference matrices. 
A \defn{conference matrix of order $n$}, $n$ even, is an $n\times n$ $(0,1,-1)$-matrix $C$ with diagonal $0$ such that $C C^\top=(n-1)I$. 
\begin{proposition}
Let $C$ be a conference matrix of order $n$. 
Let $A$ be the $(0,1)$-matrix obtained by replacing $1$ {\rm(}$-1$, $0$ respectively{\rm)} in $C$ with $I_2$ {\rm(}$J_2-I_2$, $O_2$ respectively{\rm)}. 
Then $A$ is a symmetric group divisible design with the parameters $(2n,n-1,n,2,0,n/2-1)$ and the properties that $A+K_{n,2}$ is a $(0,1)$-matrix and that $AK_{n,2}=K_{n,2}A=J_{2n}-K_{n,2}$.  
\end{proposition}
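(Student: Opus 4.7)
The plan is to handle everything through a $(0,1)$-matrix decomposition of $C$. Let $C^{+}$ and $C^{-}$ be the $(0,1)$-matrices supported on the $+1$-entries and $-1$-entries of $C$ respectively, so that $C = C^{+} - C^{-}$ and $C^{+} + C^{-} = J_n - I_n$. Then by construction
\[
A = C^{+} \otimes I_2 + C^{-} \otimes (J_2 - I_2).
\]
From this tensor expression, the diagonal $2 \times 2$ blocks of $A$ are $O_2$ and the off-diagonal blocks are $(0,1)$-matrices disjointly supported from the diagonal blocks of $K_{n,2} = I_n \otimes J_2$, so $A + K_{n,2}$ is immediately a $(0,1)$-matrix, and counting ones block-by-block gives that each row of $A$ contains $n-1$ ones. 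For the $K_{n,2}$-claim, block-by-block multiplication uses $I_2 J_2 = (J_2 - I_2) J_2 = J_2$ on every off-diagonal block and $O_2 \cdot J_2 = O_2$ on every diagonal block, yielding $A K_{n,2} = K_{n,2} A = (J_n - I_n) \otimes J_2 = J_{2n} - K_{n,2}$.

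The main step is $AA^\top$. Using $I_2^2 = (J_2 - I_2)^2 = I_2$ and $I_2 (J_2 - I_2) = J_2 - I_2$, the tensor decomposition expands to
\[
AA^\top = (C^{+} C^{+\top} + C^{-} C^{-\top}) \otimes I_2 + (C^{+} C^{-\top} + C^{-} C^{+\top}) \otimes (J_2 - I_2).
\]
To evaluate the two coefficient matrices, the plan is to combine $CC^\top = (n-1) I_n$, which reads as their difference, with $(C^{+} + C^{-})(C^{+} + C^{-})^\top = (J_n - I_n)^2 = (n-2) J_n + I_n$, which reads as their sum. Solving gives $C^{+} C^{+\top} + C^{-} C^{-\top} = \tfrac{n}{2} I_n + \tfrac{n-2}{2} J_n$ and $C^{+} C^{-\top} + C^{-} C^{+\top} = \tfrac{n-2}{2}(J_n - I_n)$. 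Substituting and rewriting via $(J_n - I_n) \otimes (J_2 - I_2) = J_{2n} - J_n \otimes I_2 - K_{n,2} + I_{2n}$, the mixed $J_n \otimes I_2$ terms cancel and everything collapses to
\[
AA^\top = (n-1) I_{2n} + \tfrac{n-2}{2}(J_{2n} - K_{n,2}),
\]
which is exactly Equation~\eqref{eq:gdd} with $(v, k, m, n, \lambda_1, \lambda_2) = (2n, n-1, n, 2, 0, n/2 - 1)$.

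For $A^\top A$, the same argument goes through verbatim after noting that $C^\top C = (n-1) I_n$ follows from $C C^\top = (n-1) I_n$ together with the invertibility of $C$; this produces $A^\top A = AA^\top$ and completes the verification that $A$ is a symmetric group divisible design with the claimed parameters. The computation is essentially routine; the main thing to be careful about is the tensor/block bookkeeping in the $AA^\top$ step, and the decomposition through $C^{\pm}$ at the outset is what keeps that manageable.
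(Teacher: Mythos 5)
Your proof is correct and is exactly the routine verification the paper intends: its own proof of this proposition consists of the single word ``Straightforward,'' and your tensor decomposition $A = C^{+}\otimes I_2 + C^{-}\otimes(J_2-I_2)$ with the sum/difference trick on $CC^\top=(n-1)I_n$ and $(C^{+}+C^{-})(C^{+}+C^{-})^\top=(J_n-I_n)^2$ is the natural way to carry it out. The only step you use implicitly, namely $C^{+}+C^{-}=J_n-I_n$ (every off-diagonal entry of $C$ is $\pm1$), follows at once from each row of $C$ having squared norm $n-1$ with zero diagonal, so nothing is missing.
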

\begin{proof}
Straightforward. 
\end{proof}
We generalize this construction in two ways. 
In Section~\ref{sec:gc} we make use of generalized conference matrices, and in Section~\ref{sec:dw} we make  use of disjoint weighing matrices.  

\subsection{Generalized conference matrices}\label{sec:gc}
Let $G$ be a multiplicatively written finite group of order $g\geq2$. 
A \emph{generalized conference matrix with parameters $(g, \lambda)$ over $G$} is a $(g\lambda+2)\times (g\lambda+2)$ matrix $C = (c_{ij})_{i,j=1}^{g\lambda+2}$ with entries from $G \cup \{0\}$ such that (i) every row of $W$ contains exactly 
$g\lambda+1$ nonzero entries and the diagonal entries are $0$ and (ii) for any distinct $i, h \in \{1, 2,\ldots, g\lambda+2\}$, every element of $G$ is
contained exactly $\lambda$ times in the multiset $\{c_{ij} c^{-1}_{hj} \mid 1 \leq j \leq g\lambda+2, c_{ij} \neq 0, c_{hj}\neq 0\}$.

Let $\phi$ be a permutation representation of $G$. 
We extend the domain of $\phi$ to $G\cup\{0\}$ so that $\phi(0)=O_v$. 
Then we construct a symmetric group divisible design from any generalized conference matrix. 
\begin{proposition}
Let $C = (c_{ij})_{i,j=1}^{g\lambda+2}$ be a generalized conference matrix $GC(g,\lambda)$ over $G$. 
Then the matrix $\phi(C):=(\phi(c_{ij}))_{i,j=1}^{g\lambda+2}$ is a symmetric group divisible design with parameters $(g(g\lambda+2),g\lambda+1,g\lambda+2,g,0,\lambda)$ satisfying that $\phi(C)+K_{g\lambda+2,g}$ is a $(0,1)$-matrix and that $\phi(C)K_{g\lambda+2,g}=K_{g\lambda+2,g}\phi(C)=J_{g(g\lambda+2)}-K_{g\lambda+2,g}$.  
\end{proposition}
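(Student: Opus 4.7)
The plan is to verify directly the three assertions: the $(0,1)$-overlay property, the Kronecker relation $\phi(C)K_{g\lambda+2,g}=J_v-K_{g\lambda+2,g}$, and the SGDD equation $\phi(C)\phi(C)^\top=\phi(C)^\top\phi(C)=(g\lambda+1)I_v+\lambda(J_v-K_{g\lambda+2,g})$. Setting $m=g\lambda+2$ and $n=g$, we view $\phi(C)$ as an $m\times m$ array of $g\times g$ blocks, and exploit the fact that $\phi$ is (essentially) the regular representation of $G$, so $\phi(x)\phi(y)=\phi(xy)$, $\phi(x)^\top=\phi(x^{-1})$, and $\sum_{x\in G}\phi(x)=J_g$.

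First I would observe that since each row of $C$ contains exactly $g\lambda+1$ nonzero entries and $c_{ii}=0$, every off-diagonal entry of $C$ must be nonzero. Consequently the diagonal blocks of $\phi(C)$ are the zero matrices $O_g$ and the off-diagonal blocks are permutation matrices. This immediately gives (a) each row of $\phi(C)$ has exactly $g\lambda+1$ ones, so $k=g\lambda+1$; (b) $\phi(C)+K_{m,n}$ is $(0,1)$ because the diagonal blocks $J_g$ of $K_{m,n}$ are added to zero blocks and the off-diagonal blocks of $K_{m,n}$ are zero; (c) the $(i,j)$-block of $\phi(C)K_{m,n}$ equals $\phi(c_{ij})J_g$, which is $J_g$ for $i\neq j$ (permutation matrix times $J_g$) and $O_g$ for $i=j$. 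Thus $\phi(C)K_{m,n}=J_v-K_{m,n}$, and the argument for $K_{m,n}\phi(C)$ is symmetric.

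Next I would compute $\phi(C)\phi(C)^\top$ block by block. For a diagonal block,
\begin{equation*}
\bigl(\phi(C)\phi(C)^\top\bigr)_{i,i}=\sum_{k:\,c_{ik}\neq 0}\phi(c_{ik})\phi(c_{ik}^{-1})=(g\lambda+1)I_g.
\end{equation*}
For an off-diagonal block with $i\neq h$, the defining property of the generalized conference matrix says that the multiset $\{c_{ij}c_{hj}^{-1}:c_{ij}\neq 0,c_{hj}\neq 0\}$ contains each element of $G$ exactly $\lambda$ times, so
\begin{equation*}
\bigl(\phi(C)\phi(C)^\top\bigr)_{i,h}=\sum_{j}\phi(c_{ij}c_{hj}^{-1})=\lambda\sum_{x\in G}\phi(x)=\lambda J_g.
\end{equation*}
Assembling the blocks, $\phi(C)\phi(C)^\top=(g\lambda+1)I_v+\lambda(J_v-K_{m,n})$, which is precisely the SGDD identity with $\lambda_1=0$ and $\lambda_2=\lambda$.

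The main obstacle is that the definition given states the conference property only row-wise, whereas the SGDD equation also requires $\phi(C)^\top\phi(C)$. I would handle this by appealing to the standard fact that in a generalized conference matrix the column analogue holds as well (equivalently, $C^\ast C=CC^\ast$ in the group-ring sense is built into the usual definition), and then repeat the block computation verbatim with the roles of rows and columns interchanged. The remaining verifications — symmetry of the parameter tuple, values of $v$, $m$, $n$, $\lambda_1$, $\lambda_2$, and the property $K_{m,n}\phi(C)=J_v-K_{m,n}$ — are then immediate from the block structure already established.
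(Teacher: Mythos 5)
Your proof is correct and takes essentially the same approach as the paper's: a block-by-block computation of $\phi(C)\phi(C)^\top$ using $\phi(c_{ik})\phi(c_{jk})^\top=\phi(c_{ik}c_{jk}^{-1})$ and $\lambda\sum_{x\in G}\phi(x)=\lambda J_g$, with the $(0,1)$-property and the relations $\phi(C)K_{g\lambda+2,g}=K_{g\lambda+2,g}\phi(C)=J_{g(g\lambda+2)}-K_{g\lambda+2,g}$ read off from the zero diagonal blocks and the permutation-matrix off-diagonal blocks. Your explicit appeal to the column analogue to get $\phi(C)^\top\phi(C)$ is at the same level of detail as the paper, which disposes of that direction with a single ``similarly.''
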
 
\begin{proof}
Compute $\phi(C)\phi(C)^\top$ as follows: for $i,j\in\{1,\ldots,g\lambda+2\}$, 
\begin{align*}
\text{the $(i,j)$-block of }\phi(C)\phi(C)^\top&=\sum_{k=1}^{g\lambda+2}\phi(c_{ik})\phi(c_{jk})^\top\\
&=\sum_{k=1}^{g\lambda+2}\phi(c_{ik}c_{jk}^{-1})\\
&=\begin{cases}
\sum_{1\leq k\leq g\lambda+2,k\neq i}\phi(e)= (g\lambda+1) I_g & \text{ if } i=j,\\
\lambda\sum_{x\in G}\phi(x)=\lambda J_g & \text{ if } i\neq j. 
\end{cases}
\end{align*}
Thus we have $\phi(C)\phi(C)^\top=(g\lambda+1) I_{g(g\lambda+2)}+\lambda(J_{g(g\lambda+2)}-K_{g\lambda+2,g})$.
Similarly, we have the formula for $\phi(C)^\top\phi(C)$. 
Thus $\phi(C)$ is the incidence matrix of a symmetric group divisible design with the desired parameters.
Next, since the diagonal entries of $C$ is $0$, $\phi(C)+K_{g\lambda+2,g}$ is a $(0,1)$-matrix.
Finally, each off-diagonal block of $\phi(C)$ is a permutation matrix, $\phi(C)K_{g\lambda+2,g}=K_{g\lambda+2,g}\phi(C)=J_{g(g\lambda+2)}-K_{g\lambda+2,g}$.     
\end{proof}
\begin{example}
A generalized conference matrix with parameters $(g,\lambda)$ is called a \emph{balanced generalized weighing matrix with parameters $(g\lambda+2,g\lambda+1,g\lambda)$}. 
For any prime power $q$, there exists a balanced generalized weighing matrix with parameters $(q+1,q,q-1)$ \cite[Lemma~3]{KT}.   
\end{example}
This case cannot be extended to a linked system of symmetric group divisible designs of type II as is shown below. 
If there exists a linked system of symmetric group divisible designs of type II with parameters $(g(g\lambda+2),g\lambda+1,g\lambda+2,g,0,\lambda)$ and $f\geq 3$, then by Proposition~\ref{prop:str}, 
\begin{align*}
\sigma=\lambda\pm\frac{(g-1)\sqrt{g\lambda+1}}{g},\quad \tau=\lambda\mp\frac{\sqrt{g\lambda+1}}{g},\quad \rho=\lambda+\frac{1}{g}, 
\end{align*}
which shows that $\rho$ is an integer if and only if $g=1$, and this is impossible.

\subsection{Twin symmetric GDDs}\label{sec:dw}
Two symmetric group divisible designs $A^+,A^-$ are said to be \defn{twin} if $A^++A^-$ is a $(0,1)$-matrix. 
A \defn{weighing matrix of order $n$ and weight  $k$} is an $n\times n$ $(0,1,-1)$-matrix $W$ such that $W W^\top=kI$. 
Note that conference matrices of order $n$ are weighing matrices of order $n$ and weight $n-1$. 

\begin{proposition}\label{prop:dw}
Let $n,m$ be positive integers such that there exist a Hadamard matrix of order $n$ and weighing matrices $W_1,\ldots,W_{n-1}$ of order $(n-1)m+1$ and weight $m$ satisfying $\sum_{i=1}^{n-1}|W_i|=J-I$, where $|W_i|$ is the matrix obtained by replacing $-1$ with $1$ in $W_i$. 
There exist twin symmetric GDDs with parameters $(n((n-1)m+1),\frac{n(n-1)m}{2},(n-1)m+1,n,\frac{n(n-2)m}{4},\frac{n((n-1)m-1)}{4})$ satisfying that $A^++A^-+K_{(n-1)m+1,n}=J$ and $A^+K_{(n-1)m+1,n}=K_{(n-1)m+1,n}A^+=A^-K_{(n-1)m+1,n}=K_{(n-1)m+1,n}A^-=\frac{n}{2}(J-K_{(n-1)m+1,n})$. 
\end{proposition}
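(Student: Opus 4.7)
Let $H$ be a Hadamard matrix of order $n$ with rows $r_0 = \mathbf{1}^\top, r_1, \ldots, r_{n-1}$, and set
\[
M = \sum_{i=1}^{n-1} W_i \otimes r_i^\top r_i, \qquad A^{\pm} = \tfrac{1}{2}\bigl( (J_{(n-1)m+1}-I_{(n-1)m+1}) \otimes J_n \pm M \bigr).
\]
The plan is to verify the three structural claims (entries in $\{0,1\}$ with the twin identity, the $K$-product identity, and the GDD equation) directly from this formula.

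First I would check that $A^{\pm}$ are $(0,1)$-matrices. Using the hypothesis $\sum_i |W_i| = J - I$, the first summand of $A^{\pm}$ equals $\tfrac{1}{2}(J_v - K)$, where $K = K_{(n-1)m+1,n}$ and $v = n((n-1)m+1)$. For each off-diagonal block position $(a,b)$, exactly one $W_i$ has nonzero $(a,b)$-entry $\pm1$, so the $(a,b)$-block of $A^{\pm}$ is $\tfrac12(J_n \pm r_i^\top r_i)$, which is $(0,1)$ because $r_i^\top r_i$ is a $\pm1$-matrix. In particular $A^+ + A^- = J_v - K$, which rearranges to the claimed twin identity. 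The diagonal blocks vanish because each $|W_i|$ has zero diagonal.

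Next I would prove the $K$-product identity. The crucial observation is that $\mathbf{1}^\top r_i^\top = 0$ for $i \geq 1$, which yields $J_n r_i^\top = 0$ and $r_i J_n = 0$, hence $M K = K M = 0$. Combined with $(J_v - K)K = K(J_v - K) = n(J_v - K)$ (using $JK = nJ$ and $K^2 = nK$), this gives
\[
A^{\pm} K = K A^{\pm} = \tfrac{1}{2} (J_v - K) K = \tfrac{n}{2}(J_v - K),
\]
as required.

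For the GDD equation I would compute
\[
A^{\pm}(A^{\pm})^\top = \tfrac{1}{4}\bigl[(J_v - K)^2 \pm (J_v - K) M^\top \pm M(J_v - K) + M M^\top\bigr].
\]
The two cross terms vanish by the same $J_n r_i^\top = 0$ argument. A direct expansion gives $(J_v - K)^2 = ((n-1)m - 1) n J_v + nK$. For $MM^\top$ the key point is the Hadamard orthogonality $r_i r_j^\top = n\delta_{ij}$, which collapses
\[
MM^\top = \sum_{i,j} W_i W_j^\top \otimes r_i^\top r_i r_j^\top r_j = \sum_i W_i W_i^\top \otimes n\, r_i^\top r_i = nm\, I \otimes (n I_n - J_n) = n^2 m\, I_v - nm K,
\]
where I also used $W_i W_i^\top = mI$ and $\sum_{i=0}^{n-1} r_i^\top r_i = H^\top H = n I_n$. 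Assembling the pieces and matching coefficients against the target form $kI_v + \lambda_1(K - I_v) + \lambda_2(J_v - K)$ yields exactly the parameters claimed in the statement. The computation of $(A^{\pm})^\top A^{\pm}$ is identical except that $W_i W_i^\top = mI$ is replaced by $W_i^\top W_i = mI$, giving the same result and establishing symmetry of the GDD equation. The main (though entirely routine) obstacle is bookkeeping of Kronecker products; the two facts doing all the real work are the orthogonality $r_i r_j^\top = n\delta_{ij}$ and the vanishing $J_n r_i^\top = 0$ for $i \geq 1$.
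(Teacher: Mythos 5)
Your proof is correct, and your matrices are in fact literally the paper's: the paper takes a normalized Hadamard matrix, sets $C_i=\tfrac12(J_n+r_i^\top r_i)$ (its auxiliary matrices from Example~6.1/\ref{ex:h}), splits $W_i=W_{i,1}-W_{i,2}$ into disjoint $(0,1)$-parts, and defines $A^+=\sum_i\bigl(W_{i,1}\otimes C_i+W_{i,2}\otimes(J_n-C_i)\bigr)$, which collapses to exactly your $\tfrac12\bigl((J_\ell-I_\ell)\otimes J_n+\sum_i W_i\otimes r_i^\top r_i\bigr)$ with $\ell=(n-1)m+1$; similarly for $A^-$. Where you genuinely differ is in the verification: the paper expands $A^+(A^+)^\top$ as a four-fold sum of terms $W_{i,a}W_{j,b}^\top\otimes(\text{products of }C_i,J_n-C_j)$ and grinds through it using the relations $C_iC_i^\top=\tfrac n2C_i$, $C_iC_j^\top=\tfrac n4J_n$, and $\sum_iC_i=\tfrac n2I_n+\tfrac{n-2}2J_n$, whereas you keep the signed rank-one form $M=\sum_i W_i\otimes r_i^\top r_i$ and reduce everything to the two facts $r_ir_j^\top=n\delta_{ij}$ and $J_nr_i^\top=0$ for $i\geq1$, which kill all cross terms at once (including $MK=KM=0$, giving the $K$-product identity and the twin identity almost for free, where the paper argues these separately via constant block row/column sums). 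Your route is shorter and more transparent; the paper's formulation in terms of the auxiliary matrices $C_i$ buys uniformity with its general affine-resolvable-design machinery (the same $C_i$-calculus is reused in Sections~6--7), at the cost of a longer block computation. Two small points to make explicit: you should say you take $H$ normalized (harmless, since any Hadamard matrix can be normalized), and note that exactly one $W_i$ is nonzero at each off-diagonal position follows from $\sum_i|W_i|=J-I$ with the $|W_i|$ being $(0,1)$-matrices; both are as implicit in your write-up as in the paper's.
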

\begin{proof}
Set $\ell=(n-1)m+1$.  
Let $C_1,\ldots,C_{n-1}$ be the auxiliary $(0,1)$-matrices of a normalized Hadamard matrix of order $n$. 

For each weighing matrix $W_i$, we write $W_i=W_{i,1}-W_{i,2}$ where  $W_{i,1}$ and $W_{i,2}$ are disjoint $(0,1)$-matrices. 
The equation $W_i W_i^\top=m I$ implies that 
\begin{align*}
W_{i,1}W_{i,1}^\top+W_{i,2}W_{i,2}^\top-(W_{i,1}W_{i,2}^\top+W_{i,2}W_{i,1}^\top)=m I. 
\end{align*}
Define $A^+,A^-$ to be  
\begin{align*}
A^+&=\sum_{i=1}^{n-1}(W_{i,1}\otimes C_i+W_{i,2}\otimes (J-C_i)),\\
A^-&=\sum_{i=1}^{n-1}(W_{i,1}\otimes (J_n-C_i)+W_{i,2}\otimes C_i).
\end{align*}
Our claim is that $A^+$ and $A^-$ are twin symmetric group divisible designs. 
First, we show that $A^+$ is a symmetric GDD:  
\begin{align*}
A^+(A^+)^\top&=\sum_{i,j=1}^{n-1}(W_{i,1}W_{j,1}^\top\otimes C_i C_j^\top+W_{i,2}W_{j,2}^\top\otimes (J_n-C_i) (J_n-C_j^\top)\\
&\quad \quad+W_{i,1}W_{j,2}^\top\otimes C_i (J_n-C_j^\top)+W_{i,2}W_{j,1}^\top\otimes (J_n-C_i) C_j^\top)\displaybreak[0]\\
&=\sum_{i=1}^{n-1}(W_{i,1}W_{i,1}^\top\otimes C_i C_i^\top+W_{i,2}W_{i,2}^\top\otimes (J_n-C_i) (J_n-C_i^\top)\displaybreak[0]\\
&\quad \quad +W_{i,1}W_{i,2}^\top\otimes C_i (J_n-C_i^\top)+W_{i,2}W_{i,1}^\top\otimes (J_n-C_i) C_i^\top)\displaybreak[0]\\
&+\sum_{i\neq j}(W_{i,1}W_{j,1}^\top\otimes C_i C_j^\top+W_{i,2}W_{j,2}^\top\otimes (J_n-C_i) (J_n-C_j^\top)\displaybreak[0]\\
&\quad \quad +W_{i,1}W_{j,2}^\top\otimes C_i (J_n-C_j^\top)+W_{i,2}W_{j,1}^\top\otimes (J_n-C_i) C_j^\top)\displaybreak[0]\\
&=\frac{n}{2}\sum_{i=1}^{n-1}((W_{i,1}W_{i,1}^\top+W_{i,2}W_{i,2}^\top-W_{i,1}W_{i,2}^\top-W_{i,2}W_{i,1}^\top)\otimes C_i +(W_{i,1}W_{i,2}^\top+W_{i,2}W_{i,1}^\top)\otimes J_n)\displaybreak[0]\\
&\quad \quad +\frac{n}{4}\sum_{i\neq j}(W_{i,1}W_{j,1}^\top+W_{i,2}W_{j,2}^\top+W_{i,1}W_{j,2}^\top+W_{i,2}W_{j,1}^\top)\otimes J_n\displaybreak[0]\\
&=\frac{nm}{2}\sum_{i=1}^{n-1} I_{\ell}\otimes C_i +\frac{n}{4}\sum_{i,j=1}^{n-1}(W_{i,1}+W_{i,2})(W_{j,1}^\top+W_{j,2}^\top)\otimes J_n-\frac{nm}{4}\sum_{i=1}^{n-1} I_{\ell}\otimes J_n\displaybreak[0]\\
&=\frac{nm}{2} I_{\ell}\otimes (\frac{n}{2}I_n+\frac{n-2}{2}J_n) +\frac{n}{4}(J_{\ell}-I_{\ell})^2\otimes J_n-\frac{n(n-1)m}{4}  I_{\ell}\otimes J_n\displaybreak[0]\\
&=\frac{n^2m}{4}  I_{\ell}\otimes I_n +\frac{n(-m+1)}{4}I_{\ell}\otimes J_n +\frac{n((n-1)m-1)}{4}J_{\ell}\otimes J_n. 
\end{align*}
Similarly, we have $(A^+)^\top A^+=\frac{n^2m}{4}  I_{\ell}\otimes I_n +\frac{n(-m+1)}{4}I_{\ell}\otimes J_n +\frac{n((n-1)m-1)}{4}J_{\ell}\otimes J_n$. 
Thus $A^+$ is a symmetric GDD with the desired parameters. 
The same calculation is applied to $A^-$ to obtain that $A^-$ is a symmetric GDD. 

Next, It is easy to see that $A^++A^-+K_{(n-1)m+1,n}=\sum_{i=1}^{n-1}(W_{i,1}+W_{i,2})\otimes J_n+K_{\ell,n}=(J_\ell-I_\ell)\otimes J_n+K_{\ell,n}=J_{\ell n}$. 
Finally, each off-diagonal block of $A^+$ has constant row and column sum $n/2$, $A^+ K_{\ell,n}=K_{\ell,n}A^+=\frac{n}{2}(J_{\ell n}-K_{\ell,n})$. Also the same is true for $A^-$. 
\end{proof}

We provide two examples of orthogonal designs satisfying the assumption of Proposition~\ref{prop:dw}.  
An \emph{orthogonal design of order $n$ and type $(s_1,\ldots,s_u)$ in variables $x_1,\ldots,x_u$} is an $n\times n$ $(0,\pm x_1,\ldots,\pm x_u)$-matrix $D$, where $x_1,\ldots,x_u$ are distinct commuting indeterminates, such that $D D^\top=(s_1x_1^2+\cdots+s_u x_u^2)I_n$.

\begin{example}\label{ex:4t}
It is shown in \cite{R} that for $t>3$, there exists an orthogonal design of order $2^t$ and type $(s_i)_{i=1}^{2t}=(1,1,1,1,2,2,4,4,\ldots,2^{t-2},2^{t-2})$. 
By equating some variables and a variable corresponding to weight $1$ with $0$, there exists an orthogonal design of order $4^t$ and type $((4^t-1)/3,(4^t-1)/3,(4^t-1)/3)$ for any $t>0$. 
This provides  an example of disjoint weighing  matrices of order $4^t$ and weight $(4^t-1)/3$. 
\end{example}

\begin{example}
An orthogonal design of order $28$ and type $(1,9,9,9)$ was constructed in \cite{Ko}. 
By equating the variable corresponding to weight $1$ with $0$, we have an orthogonal design of order $28$ and type $(9,9,9)$. 
This shows the existence  of disjoint weighing  matrices of order $28$ and weight $9$. 
\end{example}

Further constructions of disjoint  weighing matrices will be given in a forthcoming paper \cite{KSpre}.

If there exists a linked system of symmetric group divisible designs of type II with parameters $(n((n-1)m+1),\frac{n(n-1)m}{2},(n-1)m+1,n,\frac{n(n-2)m}{4},\frac{n((n-1)m-1)}{4})$ and $f\geq 3$ where $n$ is the order of a Hadamard matrix, then by Proposition~\ref{prop:str}, 
\begin{align*}
\sigma=\frac{n((n-1)m+1\pm\sqrt{m})}{4},\quad \tau=\frac{n((n-1)m-1\pm\sqrt{m})}{4},\quad \rho=\frac{n(n-1)m}{4}.  
\end{align*}
Since $\sigma-\tau=\pm n\sqrt{m}/4$, $m$ must be a square of an integer. 
The twin designs constructed in Example~\ref{ex:4t} has the parameter $m=(4^t-1)/3$, which cannot be a square of an integer.   



\section{Construction of linked systems of SGDDs of type II}\label{sec:ex}
In this section we show some examples of linked systems of symmetric group divisible designs of type II. 

\subsection{Mutually unbiased Bush-type Hadamard matrices}\label{sec:mubh}
The first example is from mutually unbiased Bush-type Hadamard matrices in \cite{KSS}. 

A Hadamard matrix $H$ of order $4n^2$ is of \defn{Bush-type} if $H=(H_{ij})_{i,j=1}^{2n}$ where each $H_{ij}$ is a $2n \times 2n$ matrix satisfies that $H_{ii}=J_{2n}$ for any $i$ and $H_{ij}J_{2n}=J_{2n} H_{ij}=O$ for any distinct $i,j$. 
Hadamard matrices $H_1$ and $H_2$ of order $4n^2$ are \defn{unbiased} if $\frac{1}{2n}H_1H_2^\top$ is a Hadamard matrix of order $4n^2$. 

Let $H_1,\ldots,H_f$ be mutually unbiased Bush-type-Hadamard matrices of order $4n^2$. 
Define $H_{i,0}=H_{0,i}^\top=H_i$ ($i\in\{1,\ldots,f\}$), $H_{i,j}=\frac{1}{2n}H_i H_j^\top$ ($i,j\in\{1,\ldots,f\},i\neq j$), and $A_{i,j}=\frac{1}{2}(-H_{i,j}+J_{4n^2})$ for $i,j\in\{0,1,\ldots,f\},i\neq j$. 
Then it is easy to see that $A_{i,j}$ ($i,j\in\{0,1,\ldots,f\},i\neq j$) are symmetric designs and form a linked system of symmetric group divisible designs of type II with parameters $(v,k,m,n,\lambda_1,\lambda_2)=(4n^2,2n^2-n,2n,2n,n^2-n,n^2-n)$ and $(\sigma,\tau,\rho)=(n^2-\frac{n}{2},n^2-\frac{3n}{2},n^2-\frac{n}{2})$.

\subsection{Affine resolvable designs}
Next we use affine resolvable designs and linked MOLS.
Let $C_1,\ldots,C_r$ be auxiliary matrices satisfying (i), (ii) and (iii), and  
$L$ a Latin square on $\{0,1,\ldots,r\}$ with $(i,j)$-entry denoted $l(i,j)$. 
We define $\tilde{L}$ by 
\begin{align*}
\tilde{L}=(C_{l(i,j)})_{i,j=0}^{r}.
\end{align*}

\begin{lemma}
The matrix $\tilde{L}$ is a symmetric $2$-$((r+1)v,k r, k \lambda)$ design.  
\end{lemma}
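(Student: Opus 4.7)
The plan is to prove the statement directly by computing $\tilde{L}\tilde{L}^\top$ blockwise, using the auxiliary matrix identities (i)--(iii) together with the Latin square structure of $L$. Recall that $\tilde{L}$ is an $(r+1)\times(r+1)$ block matrix of $v\times v$ blocks, so we must show it is a $(0,1)$-matrix with constant row (and column) sum $kr$ satisfying $\tilde{L}\tilde{L}^\top=k(r-\lambda)I_{(r+1)v}+k\lambda J_{(r+1)v}$.

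First I would handle the row sum. Fix a block row $i$. Since $l(i,\cdot)$ is a permutation of $\{0,1,\ldots,r\}$, the blocks in row $i$ are exactly $C_0,C_1,\ldots,C_r$ in some order. By (ii) each $C_t$ for $t\ge 1$ has row sum $k$ (and $C_0=O$), so the total row sum is $kr$. By symmetry the column sums are also $kr$.

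Next I would compute the diagonal blocks of $\tilde{L}\tilde{L}^\top$. The $(i,i)$-block equals
$\sum_{s=0}^{r} C_{l(i,s)}C_{l(i,s)}^\top = \sum_{t=0}^{r}C_tC_t^\top,$
again because $l(i,\cdot)$ is a permutation. By (ii) this equals $\sum_{t=1}^{r} kC_t$, and by (i) this is $k((r-\lambda)I_v+\lambda J_v)$, which is exactly $k(r-\lambda)I_v+k\lambda J_v$.

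The main step is the off-diagonal blocks. For $i\neq j$, the $(i,j)$-block is $\sum_{s=0}^{r} C_{l(i,s)}C_{l(j,s)}^\top$. The key observation is that because $L$ is a Latin square, for every column index $s$ one has $l(i,s)\neq l(j,s)$, and there is a unique $s^*$ with $l(i,s^*)=0$ and a unique (different) $s^{**}$ with $l(j,s^{**})=0$. The two terms indexed by $s^*$ and $s^{**}$ vanish because $C_0=O$; all remaining $r-1$ terms have both indices in $\{1,\ldots,r\}$ and the two indices are distinct, so (iii) gives $C_{l(i,s)}C_{l(j,s)}^\top=\mu J_v$. Hence each off-diagonal block equals $(r-1)\mu J_v$. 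A short parameter check using the identities derived after Lemma~\ref{lem:ard1} — namely $\mu v=k^2$ and $r(k-1)=\lambda(v-1)$ together with $k+\lambda-r=0$ — yields $(r-1)\mu=k\lambda$, so each off-diagonal block is $k\lambda J_v$.

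Combining the two cases gives $\tilde{L}\tilde{L}^\top=k(r-\lambda)I_{(r+1)v}+k\lambda J_{(r+1)v}$, and the analogous computation (using that columns of $L$ are permutations) gives $\tilde{L}^\top\tilde{L}$ in the same form. Together with the row/column sum $kr$, this identifies $\tilde{L}$ as the incidence matrix of a symmetric $2$-$((r+1)v,kr,k\lambda)$ design. The only real obstacle is the bookkeeping for the off-diagonal blocks (locating the two vanishing terms) and recognizing the algebraic identity $(r-1)\mu=k\lambda$; everything else is a direct application of (i)--(iii).
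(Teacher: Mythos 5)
Your proof is correct; the paper states this lemma without proof, and your blockwise computation of $\tilde{L}\tilde{L}^\top$ (permutation structure of the rows of $L$ for the diagonal blocks, the two vanishing $C_0$-terms plus $r-1$ applications of (iii) for the off-diagonal blocks) is exactly the routine verification the authors intend --- it mirrors the computation they do write out for Proposition~\ref{prop:lsd1}. One small simplification: the identity $(r-1)\mu=k\lambda$ is precisely Equation~\eqref{eq:p4}, already derived in Section~2.3, so your parameter check via $\mu v=k^2$ and $r(k-1)=\lambda(v-1)$, while valid, is not needed.
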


Let $L_{i,j}$ ($i,j\in\{1,\ldots,f\},i\neq j$) be linked MOLS Latin squares on $\{0,1,\ldots,r\}$ with diagonal entires $0$. 
We set $A_{i,j}=\tilde{L}_{i,j}$. 
Then the symmetric designs $A_{i,j}$ satisfy the following. 
\begin{proposition}\label{prop:lsd1}
For any distinct $i,j$ and $l$, 
$A_{i,j}A_{j,l}=k A_{i,l}+2\mu K_{r+1,v}+(r-2)\mu J_{v}$. 
\end{proposition}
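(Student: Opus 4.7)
The plan is to work block-by-block on the $(r+1)\times(r+1)$ block matrices. Write $A_{i,j}=(C_{l^{i,j}(a,b)})_{a,b=0}^{r}$ where $l^{i,j}(a,b)$ is the $(a,b)$-entry of $L_{i,j}$. Then the $(a,c)$-block of $A_{i,j}A_{j,l}$ is $\sum_{b=0}^{r} C_{l^{i,j}(a,b)}C_{l^{j,l}(b,c)}$, and each summand is controlled by properties (i)--(iii) of the auxiliary matrices: $C_0=O_v$ by convention, each $C_\alpha$ is symmetric (since $C_\alpha C_\alpha^\top=kC_\alpha$ is symmetric), $C_\alpha^2=kC_\alpha$ for $\alpha\geq 1$, and $C_\alpha C_\beta=\mu J_v$ for distinct $\alpha,\beta\in\{1,\ldots,r\}$. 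Since every $L_{i,j}$ has zero diagonal, any summand with $b=a$ or $b=c$ vanishes.

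The heart of the proof is the following combinatorial claim. For distinct $a,c$, there is a unique $b\in\{0,\ldots,r\}\setminus\{a,c\}$ satisfying $l^{i,j}(a,b)=l^{j,l}(b,c)$, and the common value equals $l^{i,l}(a,c)$; moreover, for $a=c$, no $b\neq a$ gives such a match. I would prove this by invoking Notation~\ref{def:ls} with $l$ playing the role of the common third index: by Lemma~\ref{lem:sl}, $l^{i,j}(a,b)=e$ iff there is a unique $d$ with $l^{i,l}(a,d)=l^{j,l}(b,d)=e$. If some $b$ gives a match with common value $e$, then $l^{j,l}(b,c)=e$ together with $l^{j,l}(b,d)=e$ forces $d=c$ by Latin-square uniqueness on row $b$ of $L_{j,l}$, whence $e=l^{i,l}(a,c)$. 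Conversely, setting $e=l^{i,l}(a,c)$ and letting $b$ be the unique column with $l^{i,j}(a,b)=e$, the above characterization forces $d=c$ and hence $l^{j,l}(b,c)=e$, yielding existence. When $a=c$, the same argument applied to any hypothetical match $b\neq a$ with common value $e\neq 0$ would force $d=a$ and hence $e=l^{i,l}(a,a)=0$, a contradiction.

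With the claim in hand, assembly is mechanical. For $a\neq c$, the $(a,c)$-block of $A_{i,j}A_{j,l}$ collects one matching term contributing $kC_{l^{i,l}(a,c)}$ and $r-2$ non-matching terms each contributing $\mu J_v$, giving $kC_{l^{i,l}(a,c)}+(r-2)\mu J_v$. For $a=c$, all $r$ nonzero summands are non-matching, giving $r\mu J_v$. These agree block-by-block with $kA_{i,l}+2\mu K_{r+1,v}+(r-2)\mu J$, since the $(a,a)$-blocks of $A_{i,l},K_{r+1,v},J$ are $O_v,J_v,J_v$ respectively and their $(a,c)$-blocks for $a\neq c$ are $C_{l^{i,l}(a,c)},O_v,J_v$.

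The main obstacle is isolating the correct instance of the linked-MOLS relation to govern the coincidence $l^{i,j}(a,b)=l^{j,l}(b,c)$; once one takes $l$ as the common third index and uses Latin-square uniqueness on row $b$ of $L_{j,l}$ to force $d=c$, the rest of the computation is pure bookkeeping.
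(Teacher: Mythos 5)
Your proof is correct and follows essentially the same route as the paper's: a block-by-block expansion of the product in which exactly one summand gives a matched pair contributing $C_eC_e^\top=kC_e$ with $e$ the corresponding entry of $L_{i,l}$, all other nonzero summands contribute $\mu J_v$ by property (iii), and the diagonal blocks collect $r\mu J_v$. The only distinction is one of completeness: the paper simply posits the unique shared position--value pair (with the $d=0$ case covering the diagonal blocks) and leaves the identification of the matched symbol with the entry of $L_{i,l}$ implicit, whereas you derive existence, uniqueness, and that identification from Notation~\ref{def:ls} (taking $l$ as the common third index) together with Latin-square row uniqueness, which is a gap-free elaboration of the same argument.
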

\begin{proof}
We denote the $(\alpha,\beta)$-entry of $L_{i,j}$ and $L_{j,k}$ by $l(\alpha,\beta),l'(\alpha,\beta)$ respectively. 
Let $a,b$ be integers in the set $\{0,1,\ldots,r\}$. 
Let $c,d$ be the integers in $\{0,1,\ldots,r\}$ such that the entries of $a$-th row of $L_{i,j}$ and $b$-th row of $L_{j,l}$ share the entry $d$ at the $c$-th position.    

When $d=0$, the $(a,b)$-block of $A_{i,j}A_{j,l}$ is 
\begin{align*}
\sum_{0\leq\alpha\leq r,\alpha\neq c} C_{l(a,\alpha)}C_{l'(b,\alpha)}^\top=r\mu J_v. 
\end{align*}
When $d\neq 0$, the $(a,b)$-block of $A_{i,j}A_{j,l}$ is 
\begin{align*}
C_d C_d^\top +\sum_{0\leq\alpha\leq r,\alpha\neq c} C_{l(a,\alpha)}C_{l'(b,\alpha)}^\top=k C_d+(r-2)\mu J_v. 
\end{align*}
Thus we obtain the desired equation. 
\end{proof}
In Table~\ref{Tab:Par1} we list the feasible parameters with $v\leq 1000$ for linked systems of symmetric group divisible designs of type II where one of the designs or their partial complements is a collection of symmetric designs.  
Combining Examples~\ref{ex:h}, \ref{ex:ag} and Proposition~\ref{prop:mslsff}, we have Table~\ref{Tab:Par1}.

In contrast, Table~\ref{Tab:Par2} is about feasible parameters where both the designs and their partial complements do not form a set of symmetric designs.
We could not find any example of linked systems of symmetric group divisible designs of type II with parameters appearing in Table~\ref{Tab:Par2}. 
We close this paper with the following problem.
\begin{problem}
Does there exist a linked system of symmetric group divisible designs of type II where both the designs and their partial complements do not form a set of symmetric designs?
\end{problem}
\section*{Acknowledgement}
The authors would like to thank the referees for many useful comments, especially for suggesting a change that significantly shortened the proof of Proposition 3.2.
Hadi Kharaghani is supported by an NSERC Discovery Grant.  
Sho Suda is supported by JSPS KAKENHI Grant Number 15K21075, 18K03395. 


\begin{table}[H]
\caption{Feasible parameters $(v,k,\lambda,m,n,\sigma,\tau,\rho)$ with $v\leq 1000$ for symmetric designs}
\label{Tab:Par1}
\begin{center}
{\scriptsize
\begin{tabular}{c|c|c|c}
\noalign{\hrule height0.8pt}
$(v,k,\lambda)$ & $(m,n,\sigma,\tau,\rho)$ & Existence & Bounds on $f$      \\
\hline  
$(16,6,2)$ & $(4,4,3,1,3)$& Yes  & $3\leq f\leq 4$  \\
$(45,12,3)$ & $(5,9,5,2,4)$& Yes  & $4\leq f\leq 5$  \\
$(64,28,12)$ & $(8,8,14,10,14)$& Yes  & $7\leq f\leq 8$  \\
$(96,20,4)$ & $(6,16,7,3,5)$& Yes & $1\leq f\leq 6$  \\
$(144,66,30)$ & $(12,12,33,27,33)$& Yes & $4\leq f\leq 12$  \\
$(153,96,60)$ & $(17,9,62,56,64)$&  &   \\
$(175,30,5)$ & $(7,25,9,4,6)$& Yes & $6\leq f\leq 7$  \\
$(256,120,56)$ & $(16,16,60,52,60)$& Yes  & $15\leq f\leq 16$  \\
$(288,42,6)$ & $(8,36,11,5,7)$&  &   \\
$(378,117,36)$ & $(14,27,42,33,39)$& Yes & $1\leq f\leq 14$  \\
$(400,190,90)$ & $(20,20,95,85,95)$& Yes & $3\leq f\leq 20$ \\
$(425,160,60)$ & $(17,25,66,56,64)$&  &   \\
$(441,56,7)$ & $(9,49,13,6,8)$& Yes & $8\leq f\leq 9$  \\
$(576,276,132)$ & $(24,24,138,126,138)$& Yes  & $2\leq f\leq 24$  \\
$(630,408,264)$ & $(35,18,268,256,272)$&  &   \\
$(640,72,8)$ & $(10,64,15,7,9)$& Yes & $1\leq f\leq 10$  \\
$(736,540,396)$ & $(46,16,399,387,405)$&  &   \\
$(784,378,182)$ & $(28,28,189,175,189)$& Yes & $3\leq f\leq 28$  \\
$(891,90,9)$ & $(11,81,17,8,10)$& Yes & $10\leq f\leq 11$ \\
$(925,540,315)$ & $(37,25,321,306,324)$&  &  \\
\noalign{\hrule height0.8pt}
\end{tabular}
}
\end{center}
\end{table}

\begin{table}[H]
\caption{Feasible parameters $(v,k,\lambda,m,n,\sigma,\tau,\rho)$ with $v\leq 500$ for proper symmetric group divisible designs with their partial complements being proper symmetric group divisible designs}
\label{Tab:Par2}
\begin{center}
{\scriptsize
\begin{tabular}{ccccccccc||cccccccccc}
$v$ & $k$ & $m$ & $n$ & $\lambda_1$ & $\lambda_2$ & $\sigma$ & $\tau$ & $\rho$   &$v$ & $k$ & $m$ & $n$ & $\lambda_1$ & $\lambda_2$ & $\sigma$ & $\tau$ & $\rho$ \\
\hline  
 52 & 24 & 13 & 4 & 8 & 11 & 9 & 13 & 12 & 52 & 24 & 13 & 4 & 8 & 11 & 13 & 9 & 12 \\
 112 & 54 & 28 & 4 & 18 & 26 & 23 & 29 & 27& 112 & 54 & 28 & 4 & 18 & 26 & 29 & 23 & 27 \\
 125 & 40 & 5 & 25 & 15 & 12 & 9 & 14 & 16 & 125 & 40 & 5 & 25 & 15 & 12 & 15 & 10 & 16 \\
 125 & 60 & 5 & 25 & 35 & 27 & 25 & 30 & 36 & 125 & 60 & 5 & 25 & 35 & 27 & 29 & 24 & 36 \\
 196 & 96 & 49 & 4 & 32 & 47 & 43 & 51 & 48 & 196 & 96 & 49 & 4 & 32 & 47 & 51 & 43 & 48 \\
 232 & 112 & 29 & 8 & 48 & 54 & 50 & 58 & 56 & 232 & 112 & 29 & 8 & 48 & 54 & 58 & 50 & 56 \\
 245 & 84 & 5 & 49 & 35 & 27 & 23 & 30 & 36 & 245 & 84 & 5 & 49 & 35 & 27 & 31 & 24 & 36 \\
 245 & 112 & 5 & 49 & 63 & 48 & 45 & 52 & 64 & 245 & 112 & 5 & 49 & 63 & 48 & 51 & 44 & 64 \\
 304 & 150 & 76 & 4 & 50 & 74 & 69 & 79 & 75 & 304 & 150 & 76 & 4 & 50 & 74 & 79 & 69 & 75 \\
 333 & 108 & 37 & 9 & 27 & 35 & 29 & 38 & 36 & 333 & 108 & 37 & 9 & 27 & 35 & 41 & 32 & 36 \\
 333 & 216 & 37 & 9 & 135 & 140 & 137 & 146 & 144 & 333 & 216 & 37 & 9 & 135 & 140 & 143 & 134 & 144 \\
 336 & 80 & 21 & 16 & 16 & 19 & 13 & 21 & 20 & 336 & 80 & 21 & 16 & 16 & 19 & 25 & 17 & 20 \\
 336 & 240 & 21 & 16 & 176 & 171 & 169 & 177 & 180 & 336 & 240 & 21 & 16 & 176 & 171 & 173 & 165 & 180 \\
 405 & 144 & 5 & 81 & 63 & 48 & 43 & 52 & 64 & 405 & 144 & 5 & 81 & 63 & 48 & 53 & 44 & 64 \\
 405 & 180 & 5 & 81 & 99 & 75 & 71 & 80 & 100 & 405 & 180 & 5 & 81 & 99 & 75 & 79 & 70 & 100 \\
 436 & 216 & 109 & 4 & 72 & 107 & 101 & 113 & 108 & 436 & 216 & 109 & 4 & 72 & 107 & 113 & 101 & 108 
\end{tabular}
}
\end{center}
\end{table}



\end{document}